\theoremstyle{plain}
\newtheorem{theorem}{Theorem}[section]
\newtheorem{lemma}[theorem]{Lemma}
\newtheorem{proposition}[theorem]{Proposition}
\newtheorem{corollary}[theorem]{Corollary}
\theoremstyle{definition}
\theoremstyle{remark}
\newtheorem{remark}[theorem]{Remark}
\def\G{\Gamma}
\begin{document}
	\title[ On the intersection ideal graph of semigroups]{On the intersection ideal graph of semigroups}
	\author[Barkha Baloda, Jitender Kumar]{Barkha Baloda, $\text{Jitender Kumar}^{^*}$}
	\address{Department of Mathematics, Birla Institute of Technology and Science Pilani, Pilani, India}
	\email{barkha0026@gmail.com,jitenderarora09@gmail.com}

	\begin{abstract}
		The intersection ideal graph $\Gamma(S)$ of a semigroup $S$ is a simple undirected graph whose vertices are all nontrivial left ideals of $S$ and two distinct left ideals $I, J$ are adjacent if and only if their intersection is nontrivial. In this paper, we investigate the connectedness of $\Gamma(S)$. We show that if $\Gamma(S)$ is connected then $diam(\Gamma(S)) \leq 2$. Further we classify the semigroups such that the diameter of their intersection graph is two. Other graph invariants, namely perfectness, planarity, girth, dominance number, clique number, independence number etc. are also discussed. Finally, if $S$ is union of $n$ minimal left ideals then we obtain the automorphism group of $\Gamma(S)$.
		\end{abstract}

	\subjclass[2010]{05C25}
	
	\keywords{Semigroup, ideals, clique number, graph automorphism\\ *  Corresponding author}
	
	\maketitle
	\section{Introduction}
Literature is abound with numerous remarkable results concerning a number of constructions of  graphs from rings, semigroups or groups. 
The  intersection graph of a  semigroup was introduced by Bos\'ak  \cite{i.Bosak} in $1964$.
 The \emph{intersection subsemigroup graph} $\Gamma(S)$  of $S$ is an undirected simple graph whose vertex set is the collection of proper subsemigroups of  $S$ and two distinct vertices $A, B$ are adjacent if and only if $A \cap B \neq \emptyset$. 
In \cite{i.Bosak}, it was shown that if $S$ is a nondenumerable semigroup or a periodic semigroup with more than two elements, then the graph $\Gamma(S)$ is connected. Bos\'ak then  raised the following  open problem: Does there exists a semigroup with more than two elements whose graph is disconnected? Y. F. Lin \cite{lin1969}, answer the problem posed by Bos\'ak, in the negative manner and proved that every semigroup with more than two elements has a connected graph. Also, B. Pond\v{e}li\v{c}ek \cite{abc} proved that  the diameter of a semigroup with more than two  elements does not exceed three.

Inspired by the work of J. Bos\'ak , Cs\'ak\'any and Poll\'ak \cite{a.Cskany1969} studied the intersection graphs of groups and showed that there is an edge between two proper subgroups if they have at least two elements common. Further, Zelinka \cite{B.kaka}, continued the work for finite abelian groups. R. Shen \cite{R.shen}, characterized all finite groups whose intersection graphs are disconnected. This solves the problem posed in \cite{a.Cskany1969}. 
The groups whose intersection graphs of normal subgroups are connected, complete, forests or bipartite  are classified in \cite{a.jafari}. Tamizh \emph{et al.} \cite{T.Chelvam2012},  continued the seminal paper of Cs\'ak\'any and Poll\'ak to introduce the subgroup intersection graph of a finite group $G$. Further, in \cite{X.Ma}, it was shown that the diameter of intersection graph of a finite non-abelian simple group has an upper bound $28$. Shahsavari \emph{et al.} \cite{a.Shahsavari2017} have studied the structure  of the automorphism group of this graph. The intersection graph on cyclic subgroups of a group has been studied in \cite{a.Haghi2017}. Further, Kayacan \emph{et al.} \cite{kayacan2015abelian} studied the conjecture given in \cite{B.kaka}, that two (noncyclic) finite abelian groups with isomorphic intersection graphs are isomorphic. In \cite{kayacan2018connectivity}, finite solvable groups whose intersection graphs are not 2-connected, finite nilpotent groups whose intersection graphs are not 3-connected is classified. Further, the dominating sets of the intersection graph of finite groups is investigated in \cite{a.kalyacan}.

Recently, Chakrabarty et al. \cite{a.sen2009}  introduced the notion of  intersection ideal graph of rings. The \emph{intersection ideal graph} $\Gamma(R)$  of a ring $R$ is an undirected simple graph whose vertex set is the collection of nontrivial left ideals of  $R$ and two distinct vertices $I, J$ are adjacent if and only if $I \cap J \neq \{0\}$. They characterized the rings $R$ for which the graph $\Gamma(R)$ is connected and obtain several necessary and sufficient conditions on a ring $R$ such that $\Gamma(R)$ is complete. Planarity of intersection graphs of ideals of ring with unity is described in \cite{MR2660547} and domination number in \cite{jafari2011dominion}. Akbari \emph{et al.} \cite{S.Akbari2013} classified all rings whose intersection graphs of ideals are not connected and also determined all rings whose clique number is finite. The intersection graphs of ideals of direct product of rings have been discussed in \cite{MR3310566}. Pucanovic \emph{et al.} \cite{MR3190084} classified all graphs of genus two that are intersection graphs of ideals of some commutative rings and obtain some lower bounds for the genus of the intersection graph of ideals of a non local commutative ring. In \cite{das2017}, Das  characterized the positive integer $n$ for which the intersection graph of ideals of $\mathbb{Z}_n$ is perfect. The Intersection graph for submodules of modules have been studied in \cite{akbari2012intersection,akbari2017some, yaraneri2013intersection}. The intersecton graph on algebraic structures have also been studied in \cite{ahmadi2016planarity,  akbari2015intersectiongroup, akbari2014some, jafari2011results,  laison2010subspace,  xu2020automorphism}.

It is pertinent as well as interesting to associate graphs to ideals of a semigroup as ideals gives a lot of information about the structure of semigroups. 
Motivated with the work of \cite{S.Akbari2013, a.sen2009},
in this paper, we consider the intersection ideal graph associated with semigroups. The \emph{intersection ideal graph} $\Gamma(S)$ of a semigroup $S$ is an undirected simple graph whose vertex set is nontrivial left ideals of  $S$ and two distinct nontrivial left ideals $I, J$ are adjacent if and only if their intersection is nontrivial. The paper is arranged as follows. In Section 2, we state necessary fundamental notions  and recall some necessary results. Section 3 comprises the results concerning the connectedness of intersection ideal graph of an arbitrary semigroup. In Section 4, we study various graph invariants of  $\Gamma(S)$ viz. girth, dominance number, independence number and clique number etc.  Further, if $S$ is union of $n$ minimal left ideals then the automorphism group of $\Gamma(S)$ is obtained.
	\section{Preliminaries}
In this section, first we recall necessary definitions and results of semigroup theory from \cite{b.clifford61vol1}. A \emph{semigroup} $S$ is a non-empty set together with an associative binary operation on $S$. The Green's $\mathcal{L}$-relation on a semigroup $S$ defined as $x$         $\mathcal{L}$ $y \Longleftrightarrow S^{1}x = S^{1}y$ where $S^{1}x = Sx \cup    \{x\}$. 
	 The $\mathcal{L}$-class of an element $a \in S$ is denoted by $L_a$. 	A non-empty subset $I$ of $S$ is said to be a \emph{left [right] ideal} if $SI \subseteq I  [IS \subseteq I]$ and an  \emph{ideal} of $S$ if $SIS \subseteq I$. Union of two left [right] ideals of $S$ is again a left [right] ideal of $S$. A left ideal $I$ is \emph{maximal} if it does not contained in  any nontrivial left ideal of $S$. If $S$ has a unique maximal left ideal then it contains every nontrivial left ideal of $S$.
	A left ideal $I$ of $S$ is \emph{minimal} if it does not properly contain any left ideal of $S$. It is well known that every non-zero element of a minimal left ideal of $S$ is in same $\mathcal{L}$-class. If $S$ has a minimal left ideal then every nontrivial left ideal contains at least one minimal left ideal. If $A$ is any other left ideal of $S$ other than $I$, then either $I \subset A$ or $I \cap A = \emptyset$. Thus we have the following remark.
	\begin{remark}\label{disjoint intersection minimal}
		Any two different minimal left ideals of a semigroup $S$ are disjoint.
	\end{remark} 
	\begin{remark}\label{everynontrivial left ideal is union}
	Let $S$ be union of $n$ minimal left ideals. Then each nontrivial left ideal is union of these minimal left ideals.
	\end{remark}
The following lemma is useful in the sequel and we shall use this without referring to it explicitly.
	\begin{lemma}\label{S minus K is lclass}
A left ideal $K$ of $S$ is maximal if and only if $S \setminus K$ is an $\mathcal{L}$-class.
\end{lemma}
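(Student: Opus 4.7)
The plan is to prove both implications by working directly with the principal left ideals $S^1a = Sa \cup \{a\}$, using the fact that for any left ideal $L$ of $S$, $a \in L$ implies $S^1a \subseteq L$.

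For the forward direction, assume $K$ is a maximal left ideal. Given any $a \in S \setminus K$, I would first observe that $K \cup S^1a$ is itself a left ideal (both summands are closed under left multiplication by $S$), and it properly contains $K$ since $a \notin K$. Maximality then forces $K \cup S^1a = S$. Now taking two arbitrary elements $a,b \in S\setminus K$, since $b \in S = K \cup S^1a$ and $b \notin K$, we must have $b \in S^1a$, whence $S^1b \subseteq S^1a$; symmetry gives the reverse inclusion, so $a \mathcal{L} b$. To finish, I need to check $S\setminus K$ is an entire $\mathcal{L}$-class, not merely contained in one: if $a \in S\setminus K$ and $b\mathcal{L} a$ but $b \in K$, then $S^1b \subseteq K$ (as $K$ is a left ideal), so $a \in S^1a = S^1b \subseteq K$, a contradiction.

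For the reverse direction, assume $S \setminus K$ is a single $\mathcal{L}$-class and suppose, toward a contradiction, that $L$ is a nontrivial left ideal with $K \subsetneq L$. Pick any $a \in L \setminus K$, so $a \in S\setminus K$. Then for every $b \in S\setminus K$ we have $a \mathcal{L} b$, giving $b \in S^1b = S^1a \subseteq L$ (the last inclusion because $a \in L$ and $L$ is a left ideal). Hence $S\setminus K \subseteq L$, together with $K \subseteq L$ this forces $L = S$, contradicting that $L$ is nontrivial.

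I do not expect a serious obstacle here; the only point requiring a little care is the second half of the forward direction, where one must verify that the $\mathcal{L}$-class containing an element of $S\setminus K$ does not leak into $K$. This is handled cleanly by the remark that $K$ being a left ideal means $b \in K$ implies $S^1b \subseteq K$, which immediately contradicts $S^1b = S^1a$ for $a \notin K$.
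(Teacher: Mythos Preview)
Your proof is correct and follows essentially the same route as the paper's: both directions hinge on the identity $K \cup S^1a = S$ for $a \in S\setminus K$ (forward) and on the observation that $L_a \subseteq K'$ forces $K' = S$ (backward). Your write-up is in fact slightly more explicit than the paper's in spelling out why $a \mathcal{L} b$ follows from $K \cup S^1a = S$ and why the $\mathcal{L}$-class cannot meet $K$, but the underlying argument is the same.
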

\begin{proof}
First suppose that $S \setminus K$ is an $\mathcal{L}-$class. Let if possible, $K$ is not maximal left ideal of $S$. Then there exists a nontrivial left ideal $K'$ of $S$ such that $K \subset K'$. There exists $a \in K'$ but $a \notin K$. Thus,  $L_a = S \setminus K$. Consequently, $L_a \subset K'$ gives $S = K'$, a contradiction. 
Conversely, suppose that $K$ is a maximal left ideal of $S$. For each $a \in S \setminus K$, maximality of $K$ implies $K \cup S^1a = S$. Consequently, $a$  $\mathcal{L}$  $b$ for every $a, b \in S \setminus K$. Thus $S \setminus K$ is contained in some $\mathcal{L}-$class and this $\mathcal{L}-$class is disjoint from $K$. It follows that $S \setminus K$ is an $\mathcal{L}-$class.   
\end{proof}

	We also require the following graph theoretic  notions \cite{westgraph}. A \emph{graph} $\Gamma$ is a pair  $\Gamma = (V, E)$, where $V = V(\Gamma)$ and $E = E(\Gamma)$ are the set of vertices and edges of $\Gamma$, respectively. We say that two different vertices $u, v$ are $\mathit{adjacent}$, denoted by $u \sim v$ or $(u,v)$, if there is an edge between $u$ and $v$. We write $u \nsim v$, if there is no edge between $u$ and $v$. The \emph{distance} between two vertices $u, v$ in $\Gamma$ is the number of edges in a shortest path connecting them and it is denoted by $d(u, v)$. If there is no path between $u$ and $v$, we say that the distance between  $u$ and $v$ is \emph{infinity} and we write as $d(u, v) = \infty$. The diameter $diam(\Gamma)$ of $\Gamma$ is the greatest distance between any pair of vertices. The \emph{degree} of the vertex $v$ in $\Gamma$ is the number of edges incident to $v$ and it is denoted by $deg(v)$. 
	A \emph{cycle} is a closed walk with distinct vertices except for the initial and end vertex, which are equal and a cycle of length $n$ is denoted by $C_n$. The \emph{girth} of $\Gamma$ is the length of its shortest cycle and is denoted by ${g(\Gamma)}$. A subset $X$ of $V(\Gamma)$ is said to be \emph{independent} if no two vertices of $X$ are adjacent. The \emph{independence number} of $\Gamma$  is the cardinality of the largest independent set and it is denoted by $\alpha(\Gamma)$. A graph $\G$ is \emph{bipartite}  if $V(\Gamma)$ is the union of two disjoint independent  set. It is well known that a graph is bipartite if and only if it has no odd cycle {\cite[Theorem 1.2.18]{westgraph}}. A connected graph $\Gamma$ is Eulerian if and only if degree of every vertex is even {\cite[Theorem 1.2.26]{westgraph}}. A \emph{subgraph}  of $\Gamma$ is a graph $\Gamma'$ such that $V(\Gamma') \subseteq V(\Gamma)$ and $E(\Gamma') \subseteq E(\Gamma)$.  A subgraph $\Gamma'$ of  $\Gamma$ is called an \emph{induced subgraph} by the elements of  $V(\Gamma') \subseteq V(\Gamma)$ if for $u, v \in V(\Gamma')$, we have $u \sim v$   in $\Gamma'$ if and only if $u \sim v$ in $\Gamma$.  The \emph{chromatic number} of $\Gamma$, denoted by $\chi(\Gamma)$, is the smallest number of colors needed to color the vertices  of $\Gamma$ so that no two adjacent vertices share the same color. A \emph{clique} in $\Gamma$ is a set of pairwise adjacent vertices. The \emph{clique number} of $\Gamma$ is the size of maximum clique in $\Gamma$ and it is denoted by $\omega(\Gamma)$. It is well known that $\omega(\Gamma) \leq \chi(\Gamma)$ (see \cite{westgraph}). A graph $\Gamma$ is \emph{perfect} if $\omega(\Gamma') = \chi(\Gamma')$ for every induced subgraph $\Gamma'$ of $\Gamma$.
	Recall that the {\em complement} $\overline{\Gamma}$ of $\Gamma$ is a graph with same vertex set as $\Gamma$ and distinct vertices $u, v$ are adjacent in $\overline{\Gamma}$ if they are not adjacent in $\Gamma$. A subgraph $\Gamma'$ of $\Gamma$ is called \emph{hole} if $\Gamma'$  is a  cycle as an induced subgraph, and $\Gamma'$  is called an \emph{antihole} of   $\Gamma$ if   $\overline{\Gamma'}$ is a hole in $\overline{\Gamma}$.

	\begin{theorem}\label{strongperfecttheorem}\cite{strongperfectgraph}
		A finite graph $\Gamma$
		is perfect if and only if it does not contain hole or antihole of odd length at least $5$.
	\end{theorem}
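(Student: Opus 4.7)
The statement is the Strong Perfect Graph Theorem of Chudnovsky, Robertson, Seymour, and Thomas, so any honest proof proposal must acknowledge that this is a very deep result whose full argument occupies more than a hundred pages. Rather than attempt a genuinely new proof, I would follow the historical route: prove the theorem by contradiction, assuming a minimum (vertex-minimal) counterexample $\Gamma$, which is a \emph{Berge graph} (no odd hole and no odd antihole of length $\geq 5$) that is not perfect. The easy direction is immediate: a hole or antihole of odd length $\geq 5$ has $\omega = 2, \chi = 3$ or $\omega = (n-1)/2, \chi = (n+1)/2$ respectively, so any graph containing such an induced subgraph fails $\omega = \chi$ on that subgraph and hence is not perfect.

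For the hard direction, the plan is a structural dichotomy theorem for Berge graphs: every Berge graph is either \emph{basic} or admits one of a short list of structural decompositions. The basic class would consist of bipartite graphs, complements of bipartite graphs, line graphs of bipartite graphs, complements of such line graphs, and the so-called double split graphs; each of these is already known to be perfect by classical results (K\H{o}nig's theorem and its line-graph counterpart handle the bipartite and line-graph cases). The decompositions to consider are the \emph{2-join}, the \emph{homogeneous pair}, and the \emph{balanced skew-partition}. Having set this up, I would invoke Lov\'asz's replacement lemma and known results of Cornu\'ejols--Cunningham and of Chv\'atal to show that a minimum imperfect Berge graph cannot admit any of these decompositions; combined with the structure theorem this forces the minimum counterexample to be basic, hence perfect, a contradiction.

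The main work, and the step that would be by far the largest obstacle, is establishing the structure theorem itself: showing that a Berge graph that is not basic must admit a 2-join, a homogeneous pair, or a balanced skew-partition. The Chudnovsky--Robertson--Seymour--Thomas proof proceeds by a delicate case analysis driven by the presence or absence of certain configurations (in particular an induced prism, or a ``stretcher'' and its relatives) and requires carefully tailored trigger lemmas for each configuration. I would isolate the argument into (i) a no-prism case handled via even-pair and skew-partition arguments in the spirit of Conforti--Cornu\'ejols, and (ii) a prism-containing case which is the technical heart, requiring a long sequence of lemmas extending a given prism to expose either a basic structure or a decomposition.

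Given the scope of such an argument, my proposal for the present paper would be to simply cite Theorem \ref{strongperfecttheorem} as established in \cite{strongperfectgraph} and use only its statement, which is exactly what the excerpt does; the theorem will be used later as a black-box tool to identify perfectness of $\Gamma(S)$ by looking for odd holes and antiholes in concrete intersection ideal graphs.
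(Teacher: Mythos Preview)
Your proposal is correct and matches the paper's treatment: the paper does not prove Theorem~\ref{strongperfecttheorem} at all but simply states it with the citation \cite{strongperfectgraph}, exactly as you recommend in your final paragraph. Your outline of the Chudnovsky--Robertson--Seymour--Thomas argument is a fair high-level summary, but since the paper only quotes the result as a black box there is nothing further to compare.
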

	A subset $D$ of $V(\Gamma)$ is said to be a dominating set if any vertex in $V(\Gamma) \setminus D$ is adjacent to at least one vertex in $D$. If $D$ contains only one vertex then that vertex is called dominating vertex. The \emph{domination number} $\gamma(\Gamma)$ of $\Gamma$ is the minimum size of a dominating set in $\Gamma$. A graph $\Gamma$ is said to be planar if it can be drawn on a plane without any crossing of its edges. In $\Gamma$, a vertex $z$ resolves a pair of distinct vertices $x$ and $y$ if
	$d(x, z) \neq d(y, z)$. A resolving set of $\Gamma$ is a subset $R \subseteq V (\Gamma)$ such that every pair of distinct vertices of $\Gamma$ is resolved by some vertex in $R$. The metric dimension of $\Gamma$,
	denoted by $\beta(\Gamma)$, is the minimum cardinality of a resolving set of $\Gamma$. For  vertices $u$ and $v$ in a graph $\Gamma$, we say that $z$ \emph{strongly resolves} $u$ and $v$ if there exists a shortest path from $z$ to $u$ containing $v$, or a shortest path from $z$ to $v$ containing $u$. A subset $U$ of $V(\Gamma)$ is a \emph{strong resolving set} of $\Gamma$ if every pair of vertices of $\Gamma$ is strongly resolved by some vertex of $U$. The least cardinality of a strong resolving set of $\Gamma$ is called the \emph{strong metric dimension} of $\Gamma$ and is denoted by $\operatorname{sdim}(\Gamma)$. For  vertices $u$ and $v$ in a graph $\Gamma$, we write $u\equiv v$ if $N[u] = N[v]$. Notice that that $\equiv$ is an equivalence relation on $V(\Gamma)$.
We denote by $\widehat{v}$ the $\equiv$-class containing a vertex $v$ of $\Gamma$.
 Consider a graph $\widehat{\Gamma}$ whose vertex set is the set of all $\equiv$-classes, and vertices $\widehat{u}$ and  $\widehat{v}$ are adjacent if $u$ and $v$ are adjacent in $\Gamma$. This graph is well-defined because in $\Gamma$, $w \sim v$ for all $w \in \widehat{u}$ if and only if $u \sim v$.  We observe that $\widehat{\Gamma}$ is isomorphic to the subgraph $\mathcal{R}_{\Gamma}$ of $\Gamma$ induced by a set of vertices consisting of exactly one element from each $\equiv$-class. Subsequently, we have the following result of \cite{ma2018strong} with $\omega(\mathcal{R}_{\Gamma})$ replaced by $\omega(\widehat{\Gamma})$.

\begin{theorem}[{\cite[Theorem 2.2]{ma2018strong}}]\label{strong-metric-dim}
For any graph $\Gamma$ with diameter $2$,  $\operatorname{sdim}(\Gamma) = |V(\Gamma)| - \omega(\widehat{\Gamma})$.
\end{theorem}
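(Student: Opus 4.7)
The plan is to reduce to the classical framework of Oellermann and Peters-Fransen, who proved that $\operatorname{sdim}(\Gamma)$ equals the minimum vertex cover number of the \emph{strong resolving graph} $G_{SR}(\Gamma)$, i.e.\ the graph on $V(\Gamma)$ whose edges are the mutually maximally distant (MMD) pairs. Combined with the Gallai identity $\alpha(H) + \tau(H) = |V(H)|$, this rephrases the goal as the identity $\alpha(G_{SR}(\Gamma)) = \omega(\widehat{\Gamma})$ for graphs of diameter~$2$.

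First I would pin down which pairs are MMD when $\operatorname{diam}(\Gamma) = 2$. For $u \nsim v$ the distance $d(u,v)$ equals the maximum possible value $2$, so the MMD condition is automatic. For $u \sim v$ the pair is MMD if and only if every neighbor of $v$ lies in $N[u]$ and every neighbor of $u$ lies in $N[v]$, which simplifies to $N[u]=N[v]$, that is, $u \equiv v$. Hence the edge set of $G_{SR}(\Gamma)$ consists precisely of the non-edges of $\Gamma$ together with the adjacent twin pairs, i.e.\ the pairs lying in a common $\equiv$-class.

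Next I would translate an independent set $X$ of $G_{SR}(\Gamma)$ into a clique of $\widehat{\Gamma}$. Two distinct vertices of $X$ must be adjacent in $\Gamma$ (otherwise they would lie at distance $2$, hence form an MMD edge) and must lie in different $\equiv$-classes (otherwise they would form an adjacent-twin MMD edge). Consequently $X$ is a clique of the induced subgraph $\mathcal{R}_\Gamma$, and under the isomorphism $\mathcal{R}_\Gamma \cong \widehat{\Gamma}$ recalled before the statement it corresponds to a clique of $\widehat{\Gamma}$. Conversely, any clique of $\widehat{\Gamma}$ lifts to an independent set of $G_{SR}(\Gamma)$ of the same size by choosing one representative from each $\equiv$-class. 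Putting the steps together gives $\operatorname{sdim}(\Gamma) = |V(\Gamma)| - \alpha(G_{SR}(\Gamma)) = |V(\Gamma)| - \omega(\widehat{\Gamma})$.

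The most delicate step is the adjacent-twin analysis in the second paragraph: one has to verify carefully that for adjacent $u,v$ the MMD condition genuinely forces $N[u]=N[v]$, and that conversely adjacent twins are always MMD, as these are the very edges of $G_{SR}(\Gamma)$ that prevent two members of the same $\equiv$-class from coexisting in an independent set. Once this case is handled, the bijection between independent sets of $G_{SR}(\Gamma)$ and cliques of $\widehat{\Gamma}$ is straightforward and the theorem follows.
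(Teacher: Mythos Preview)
The paper does not actually prove this theorem; it is quoted verbatim (with the cosmetic replacement of $\mathcal{R}_\Gamma$ by $\widehat{\Gamma}$) from \cite{ma2018strong} and used as a black box. So there is no ``paper's own proof'' to compare against.

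That said, your argument is correct and is precisely the approach taken in the source paper \cite{ma2018strong}: invoke the Oellermann--Peters-Fransen identification $\operatorname{sdim}(\Gamma)=\tau(G_{SR}(\Gamma))$, apply Gallai to convert to $\alpha(G_{SR}(\Gamma))$, and then characterise the MMD pairs in a diameter-$2$ graph. Your case analysis is accurate: pairs at distance $2$ are automatically MMD because $2$ is the diameter, while for adjacent $u,v$ the condition ``every neighbour of $u$ is at distance $\le 1$ from $v$'' is exactly $N(u)\subseteq N[v]$, and symmetrising gives $N[u]=N[v]$. The only point where you should be slightly more explicit is the converse direction of the correspondence: when you lift a clique of $\widehat{\Gamma}$ to representatives in $\Gamma$, you need that distinct $\equiv$-classes $\widehat{u}\sim\widehat{v}$ really do yield $u\sim v$ in $\Gamma$ for \emph{any} chosen representatives, which follows from the well-definedness remark preceding the theorem statement. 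With that one sentence added, the proof is complete.
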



	\section{Connectivity of the Intersection graph $\Gamma(S)$}
	In this section, we investigate the connectedness of $\Gamma(S)$. We show that $diam(\Gamma(S)) \leq 2$ if it is connected. Also, we classify the semigroups, in terms of left ideals, such that the diameter of $\Gamma(S)$ is two.
	\begin{theorem}\label{disconnectedintersection}
		The intersection ideal graph $\Gamma(S)$ is disconnected if and only if $S$ contains at least two minimal left ideals and every nontrivial left ideal of $S$ is minimal as well as maximal.
	\end{theorem}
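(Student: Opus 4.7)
The plan is to attack both directions using the basic observation recalled in the preliminaries that the union of two left ideals is again a left ideal, together with Remark \ref{disjoint intersection minimal}. The reverse implication is almost immediate: if every nontrivial left ideal is minimal, then Remark \ref{disjoint intersection minimal} shows that any two distinct nontrivial left ideals are disjoint, so $\Gamma(S)$ has no edges; the hypothesis of at least two minimal left ideals then supplies at least two vertices, forcing $\Gamma(S)$ to be disconnected.

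For the forward implication, I would assume $\Gamma(S)$ is disconnected and pick nontrivial left ideals $I$ and $J$ lying in distinct components. The first observation is $I \cup J = S$, since otherwise $I \cup J$ is a nontrivial left ideal adjacent to both $I$ and $J$ (their pairwise intersections being $I$ and $J$ respectively), yielding a forbidden path. To prove that every nontrivial left ideal $L$ is both minimal and maximal, I would fix a nontrivial left ideal $M$ in a component different from that of $L$, which ensures $L \cap M$ is trivial. For minimality, suppose $K \subsetneq L$ is a nontrivial left ideal; then $K \cap M \subseteq L \cap M$ is also trivial, and the key step is to exclude $K \cup M = S$: if this equality held, then $L = L \cap (K \cup M) = (L \cap K) \cup (L \cap M) = K$, contradicting $K \subsetneq L$. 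Hence $K \cup M$ is a nontrivial left ideal, producing the forbidden path $L \sim K \sim (K \cup M) \sim M$. For maximality, any nontrivial $N \supsetneq L$ would be adjacent to $L$, and hence lie in the component of $L$, so $N \cap M$ would be trivial; the minimality argument applied to $N$ would then force $N$ to be minimal, contradicting $L \subsetneq N$. Specializing to $L = I$ and $L = J$ shows that $I$ and $J$ are distinct minimal left ideals, so $S$ contains at least two of them.

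The main obstacle is precisely the step ruling out $K \cup M = S$ in the minimality argument, where one has to combine the containment $K \subseteq L$ with the triviality of $L \cap M$ to force $L = K$. Once this little computation is in hand, the rest of the proof reduces to bookkeeping of short paths in $\Gamma(S)$.
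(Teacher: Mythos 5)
Your proof is correct and follows essentially the same route as the paper's: both arguments bridge the two components with the auxiliary vertex $K \cup M$ when that union is proper, and dispose of the case $K \cup M = S$ by showing the triviality of $L \cap M$ then forces $L = K$ (the paper phrases this contrapositively as producing a nontrivial intersection). The only organizational differences are that you derive maximality as a corollary of the minimality claim rather than rerunning the path argument, and that your opening observation $I \cup J = S$ is never actually used.
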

	\begin{proof}
	    First suppose that $\Gamma(S)$ is not connected. Then $S$ has at least two nontrivial left ideals, namely $I_1, I_2$. Without loss of generality, assume that $I_1 \in C_1$ and $I_2 \in C_2$, where $C_1$ and $C_2$ are distinct components of $\Gamma(S)$.  If $I_1$ is not minimal then there exists at least one nontrivial left ideal $I_k$ of $S$ such that $I_k \subset I_1$ so that their intersection is nontrivial. Therefore, $I_1 \sim I_k$. Now if the intersection of $I_2$ and $I_k$ is nontrivial then $I_1 \sim I_k \sim I_2$, a contradiction. Therefore the  intersection of $I_2$ and $I_k$ is trivial. If $I_2 \cup I_k \neq S$ then $I_1 \sim I_2 \cup I_k \sim I_2$, a contradiction. Thus, $I_k \cup I_2 = S$. It follows that $I_1 \sim I_2$, again a contradiction. Thus $I_1$ is minimal. Similarly, we get $I_2$ is minimal. 
	    
	    Further assume that $I_1$ is not maximal. Then  there exists a nontrivial left ideal $I_k$ of $S$ such that $I_1 \subset I_k$ so that $I_1 \sim I_k$. If $I_1 \cup I_2 \neq S$ then $I_1 \sim I_1 \cup I_2 \sim I_2$, a contradiction to the fact that $\Gamma(S)$ is disconnected. It follows that $I_1 \cup I_2 = S$ so that the intersection of $I_k$ and $I_2$ is nontrivial. Thus we have $I_1 \sim I_k \sim I_2$, a contradiction. Hence $I_1$ is maximal. Similarly, we observe that $I_2$ is maximal. The converse follows from the Remark \ref{disjoint intersection minimal}.
	\end{proof}
		\begin{corollary}\label{null graphintersection}
If the graph $\Gamma(S)$ is disconnected then it is a null graph (i.e. it has no edge). 
\end{corollary}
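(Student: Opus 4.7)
The plan is to deduce this directly from Theorem \ref{disconnectedintersection} together with Remark \ref{disjoint intersection minimal}, rather than redoing any casework on components.

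First I would invoke Theorem \ref{disconnectedintersection}: assuming $\Gamma(S)$ is disconnected, every vertex of $\Gamma(S)$, that is, every nontrivial left ideal of $S$, is simultaneously minimal and maximal. In particular, the vertex set of $\Gamma(S)$ consists entirely of minimal left ideals of $S$.

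Next I would take two arbitrary distinct vertices $I_1, I_2$ of $\Gamma(S)$. Since both are minimal, Remark \ref{disjoint intersection minimal} applies and forces $I_1 \cap I_2 = \varnothing$. By the definition of $\Gamma(S)$, two distinct vertices are adjacent only when their intersection is nontrivial, so $I_1$ and $I_2$ are non-adjacent. As this holds for every pair of distinct vertices, $\Gamma(S)$ has no edges at all, i.e., it is a null graph.

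I do not expect any real obstacle here; the corollary is essentially a one-line consequence of the previous theorem once one observes that "every nontrivial left ideal is minimal" combined with Remark \ref{disjoint intersection minimal} trivializes all possible adjacencies. The only thing to be a bit careful about is wording: one must recall that in this paper "nontrivial intersection" (as used to define the edge relation) matches exactly with "non-disjoint" in the semigroup setting without zero, so the conclusion from Remark \ref{disjoint intersection minimal} indeed rules out every edge.
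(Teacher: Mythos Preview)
Your proof is correct and matches the paper's intended argument: the corollary is stated without proof precisely because it is an immediate consequence of Theorem \ref{disconnectedintersection} and Remark \ref{disjoint intersection minimal}, exactly as you spell out. Your caution about ``nontrivial intersection'' versus ``disjoint'' is well placed but unproblematic here, since in this setting the two notions coincide.
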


\begin{theorem}\label{two minimalintersection}
		The graph $\Gamma(S)$ is disconnected if and only if $S$ is the union of exactly two minimal left ideals.
	\end{theorem}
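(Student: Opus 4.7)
The plan is to leverage Theorem \ref{disconnectedintersection}, which already reduces disconnectedness of $\Gamma(S)$ to the conjunction of (a)~$S$ has at least two minimal left ideals, and (b)~every nontrivial left ideal of $S$ is both minimal and maximal. With that in hand, the present statement becomes an equivalence between these two conditions and ``$S$ is the union of exactly two minimal left ideals.'' I would prove it in two directions, working over the forward implication first since it is the more substantive one.

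For the forward direction, assume $\Gamma(S)$ is disconnected and pick two distinct minimal left ideals $I_1, I_2$ guaranteed by Theorem \ref{disconnectedintersection}. The first key step is to show $I_1 \cup I_2 = S$. Recall that the union of two left ideals is again a left ideal, and $I_1 \subsetneq I_1 \cup I_2$. By Theorem \ref{disconnectedintersection}, $I_1$ is maximal, so $I_1 \cup I_2$ cannot be a proper (nontrivial) left ideal, forcing $I_1 \cup I_2 = S$. The second step is to rule out any further minimal left ideal: if $I_3$ were a minimal left ideal distinct from $I_1$ and $I_2$, then by Remark \ref{disjoint intersection minimal} it would be disjoint from both, hence $I_3 \cap S = I_3 \cap (I_1 \cup I_2) = \emptyset$, contradicting $I_3 \neq \emptyset$. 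So $S$ is the union of exactly two minimal left ideals.

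For the converse, assume $S = I_1 \cup I_2$ with $I_1, I_2$ minimal left ideals. By Remark \ref{everynontrivial left ideal is union}, every nontrivial left ideal is a union of minimal left ideals contained in $S$; since $I_1 \cup I_2 = S$ is not nontrivial, the only possibilities are $I_1$ and $I_2$ themselves. Thus the vertex set of $\Gamma(S)$ consists of exactly the two vertices $I_1, I_2$, and by Remark \ref{disjoint intersection minimal} they are disjoint, so there is no edge between them; $\Gamma(S)$ is disconnected. I do not anticipate any real obstacle here: the only subtle point is keeping ``nontrivial'' (i.e.\ proper) straight when excluding $S$ itself from being counted as a vertex, and verifying that any hypothetical third minimal left ideal is forced into disjointness with $S$ via Remark \ref{disjoint intersection minimal}.
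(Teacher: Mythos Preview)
Your proof is correct and follows essentially the same strategy as the paper: both directions hinge on Theorem~\ref{disconnectedintersection} and on showing that the only nontrivial left ideals are $I_1$ and $I_2$. The only cosmetic differences are that the paper uses the \emph{minimality} conclusion of Theorem~\ref{disconnectedintersection} (rather than maximality) to force $I_1 \cup I_2 = S$ and to rule out a third minimal ideal, and in the converse it argues directly that any further nontrivial left ideal would contain both $I_1$ and $I_2$ rather than citing Remark~\ref{everynontrivial left ideal is union}.
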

\begin{proof}
		Suppose first that $\Gamma(S)$ is disconnected. Then by Theorem \ref{disconnectedintersection}, each nontrivial left ideal of $S$ is minimal. Suppose $S$ has at least three minimal left ideals, namely $I_1, I_2$
		and $I_3$. Then $I_1 \cup I_2$ is a nontrivial left ideal of $S$ which is not minimal. Consequently, by Theorem \ref{disconnectedintersection}, we get a contradiction of the fact that $\Gamma(S)$ is disconnected. Thus, $S$ has exactly two minimal left ideals. If $S \neq I_1 \cup I_2$, then $I_1 \cup I_2$ is a nontrivial left ideal which is not minimal, a contradiction ( cf. Theorem \ref{disconnectedintersection}). Thus, $S = I_1 \cup I_2$.
		
		Conversely, suppose $S = I_1 \cup I_2$ where $I_1$ and  $I_2$ are minimal left ideals of $S$. If there exists another nontrivial left ideal $I_k$ of $S$ then either $I_1 \subset I_k$ or $I_2 \subset I_k$. Without loss of generality, assume that $I_1 \subset I_k$, we have $I_1 \sim I_k$. Since $I_1 \cup I_2 = S$ we get $I_k \cup I_2 = S$. It follows that the intersection of $I_2$ and $I_k$ is nontrivial. By minimality of $I_2$, we can observe that $I_2 \subset I_k$. Consequently, $S \subseteq I_k$, a contradiction.  Thus, by Theorem \ref{disconnectedintersection},  $\Gamma(S)$ is disconnected.
	
\end{proof}
	\begin{theorem}\label{diameter2}
		If $\Gamma(S)$ is a connected graph then $diam(\Gamma(S))$ $\leq$ $2$.	
	\end{theorem}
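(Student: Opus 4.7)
The plan is to take two arbitrary distinct vertices $I, J$ of $\Gamma(S)$ and show $d(I,J) \leq 2$, proceeding via three cases based on whether $I \cap J$ is nontrivial and whether $I \cup J = S$.

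First, I would dispose of the two routine cases. If $I \cap J$ is nontrivial, then $I \sim J$ directly from the definition, so $d(I,J) = 1$. If $I \cap J$ is trivial but $I \cup J \neq S$, I would take $K := I \cup J$; this is a left ideal (unions of left ideals being left ideals), it is proper and nontrivial (it contains $I$), and it is strictly larger than both $I$ and $J$: triviality of $I \cap J$ together with nontriviality of $J$ forces some element of $J$ outside $I$, so $I \subsetneq K$, and symmetrically $J \subsetneq K$. Hence $K \cap I = I$ and $K \cap J = J$ are both nontrivial, yielding the length-$2$ path $I \sim K \sim J$.

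The substantive case is $I \cap J$ trivial with $I \cup J = S$, where the union is no longer available as an intermediate vertex. Here I would invoke the hypothesis that $\Gamma(S)$ is connected and fix a shortest path $I = L_0 \sim L_1 \sim \cdots \sim L_m = J$. Since $I \not\sim J$ we have $m \geq 2$, and I would rule out $m \geq 3$ by contradiction. Consider $L_1$: if $L_1 \cap J$ is nontrivial then $I \sim L_1 \sim J$ is a path of length $2$, contradicting $m \geq 3$. Otherwise $L_1 \cap J$ is trivial; then since $L_1 \subseteq S = I \cup J$ every element of $L_1$ must lie in $I$, so $L_1 \subseteq I$. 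But then the edge $L_1 \sim L_2$ gives $L_1 \cap L_2$ nontrivial, and the containment $L_1 \subseteq I$ upgrades this to $I \cap L_2 \supseteq L_1 \cap L_2$ nontrivial, so $I \sim L_2$. This produces the shorter path $I \sim L_2 \sim \cdots \sim L_m = J$ of length $m - 1$, contradicting minimality.

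The first two cases are straightforward from the fact that unions of left ideals are left ideals. The main obstacle is the third case, where that construction fails and connectedness must be used in an essential way. The crucial observation is that when $I$ and $J$ trivially intersect while covering $S$, any left ideal meeting $J$ only trivially is forced into $I$; this transfer principle absorbs the first edge of any purportedly shortest long path into $I$ and contradicts minimality, leaving $m \leq 2$ as the only possibility.
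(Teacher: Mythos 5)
Your proof is correct, and your first two cases (adjacent vertices; disjoint ideals whose union is proper, bridged by $I \cup J$) coincide with the paper's. The difference lies in the essential case where $I \cap J$ is trivial and $I \cup J = S$. The paper uses connectedness only to produce a single neighbour $I_k$ of $I$ and then runs a case analysis on how $I_k$ sits relative to $I$: if $I_k$ and $I$ are incomparable, or $I \subset I_k$, an element of $I_k \setminus I$ lies in $J$ and one gets $I \sim I_k \sim J$; if instead $I_k \subset I$, the paper must separately check $I_k \cup J \neq S$ and route through the auxiliary vertex $I \sim (I_k \cup J) \sim J$. You instead take a shortest $I$--$J$ path and shorten it: the observation that a left ideal meeting $J$ trivially is absorbed into $I$ (because $I \cup J = S$) lets you either jump from $L_1$ directly to $J$ or skip $L_1$ entirely via $I \sim L_2$. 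Your version buys a uniform argument that avoids both the containment subcases and the auxiliary union; the paper's version is more constructive in that it exhibits the middle vertex of the length-two path starting from a single neighbour rather than from a whole path. Both arguments ultimately rest on the same covering principle, namely that when $I \cup J = S$ every element outside $I$ lies in $J$.
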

	\begin{proof}
		Let $I_1, I_2$ be two nontrivial left ideals of $S$. If $I_1 \sim I_2$ then $d(I_1, I_2)$ = 1. If $I_1 \nsim I_2$ i.e. $I_1 \cap I_2$ is trivial then in the following cases we show that $d(I_1, I_2) $$\leq 2$.
		
		\noindent\textbf{Case 1.} $I_1 \cup I_2 \neq S$. Then $I_1 \sim (I_1 \cup I_2) \sim I_2$ so that $d(I_1, I_2)$ = 2.
		
		\noindent\textbf{Case 2.} $I_1 \cup I_2 = S$. Since $\Gamma(S)$ is a connected graph, there exists a nontrivial left ideal $I_k$ of $S$ such that either $I_1 \cap I_k$ is nontrivial or $I_2 \cap I_k$ is nontrivial. Now we have the following subcases.
		
		\textbf{Subcase 1.} $I_1 \not \subset I_k$ and $I_k \not \subset I_1$. Since $I_1 \not \subset I_k$ it follows that there exists $x \in I_k$ but $x \notin I_1$ so that $x \in I_2$. Consequently, $I_2 \cap I_k$ is nontrivial. Therefore, we get a path $I_1 \sim I_k \sim I_2$ of length two. Thus, $d(I_1, I_2) = 2$. 
		
		\textbf{Subcase 2.} $I_k \subset I_1$. There exists $x \in I_1$ but $x \notin I_k $. If $I_2 \cup I_k = S$ then $x \in I_2$. Thus, we  get $I_1 \cap I_2$ is nontrivial, a contradiction. Consequently, $I_2 \cup I_k \neq S$. Further, we get a path $I_1  \sim (I_2 \cup I_k) \sim I_2$ of length two. Thus, $d(I_1, I_2) = 2$.
		
		\textbf{Subcase 3.} $I_1 \subset I_k$. Since $I_1 \cup I_2 = S$ we get $I_k \cup I_2 = S$. Further, the intersection of  $I_k$ and $I_2$ is nontrivial. Consequently, $I_1 \sim I_k \sim I_2$ gives a path of length two between $I_1$ and $I_2$. Thus, $d(I_1, I_2) = 2$. Hence, $diam(\Gamma(S))$ $\leq$ $2$.
		
	\end{proof}	
\begin{lemma}
  Let $S$ be a semigroup having minimal left ideals. Then $\Gamma(S)$ is complete if and only if $S$ has unique minimal left ideal.
\end{lemma}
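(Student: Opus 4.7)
The plan is to handle both implications directly from facts recorded in the preliminaries, namely Remark \ref{disjoint intersection minimal} (distinct minimal left ideals are disjoint) and the fact stated in Section 2 that every nontrivial left ideal of $S$ contains at least one minimal left ideal when $S$ has a minimal left ideal.

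For the forward direction, I will argue by contrapositive. Assume $S$ contains two distinct minimal left ideals $I_1$ and $I_2$. By Remark \ref{disjoint intersection minimal}, $I_1 \cap I_2$ is empty, so in particular it is trivial, and therefore $I_1 \nsim I_2$ in $\Gamma(S)$. Since $I_1$ and $I_2$ are both vertices of $\Gamma(S)$, this shows $\Gamma(S)$ is not complete.

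For the backward direction, suppose $S$ has a unique minimal left ideal $M$. Take any two distinct nontrivial left ideals $I$ and $J$ of $S$. By the preliminary fact recalled above, each of $I$ and $J$ contains some minimal left ideal; since $M$ is the only one, we must have $M \subseteq I$ and $M \subseteq J$. Hence $M \subseteq I \cap J$, so $I \cap J$ is nontrivial, and consequently $I \sim J$ in $\Gamma(S)$. As $I, J$ were arbitrary distinct vertices, $\Gamma(S)$ is complete.

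There is no genuine obstacle here; the statement is essentially a direct bookkeeping consequence of the structural facts on minimal left ideals collected in Section 2. The only mild subtlety is ensuring that in the backward direction we are justified in placing $M$ inside every nontrivial left ideal, and this is exactly the preliminary observation that in the presence of a minimal left ideal, every nontrivial left ideal contains at least one such minimal ideal, combined with uniqueness.
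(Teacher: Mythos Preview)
Your proof is correct and follows essentially the same approach as the paper: both directions rely on Remark \ref{disjoint intersection minimal} and on the fact that every nontrivial left ideal contains a minimal one, just as you do. The only cosmetic difference is that the paper phrases the forward direction as a proof by contradiction rather than by contrapositive.
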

\begin{proof}
    Suppose that $S$ contains a unique minimal left ideal $I_1$. Note that every nontrivial left ideal of $S$ contains at least one minimal left ideal. Since $I_1$ is unique then it must contained in every nontrivial left ideals of $S$. Thus, the graph $\Gamma(S)$ is complete. 
        
        Conversely, suppose that $\Gamma(S)$ is a complete graph. On  contrary if $S$ has at least two minimal left ideals, viz. $I_1, I_2$. By Remark \ref{disjoint intersection minimal}, $I_1 \nsim I_2$, a contradiction to the fact that $\Gamma(S)$ is complete. Thus $S$ has unique minimal left ideal.  
\end{proof}

\begin{lemma}
 If graph $\Gamma(S)$ is a regular if and only if either $\Gamma(S)$ is null or a complete graph.   
\end{lemma}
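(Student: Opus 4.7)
The forward direction is immediate: a null graph has every vertex of degree $0$, and a complete graph on $n$ vertices is $(n-1)$-regular. For the converse, assume $\Gamma(S)$ is regular. If it is disconnected then by Corollary \ref{null graphintersection} it is null, so I may assume $\Gamma(S)$ is connected, and argue by contradiction that it is complete. Suppose there exist distinct nontrivial left ideals $I_1, I_2$ with $I_1 \cap I_2$ trivial; let $k$ denote the common vertex degree. The plan in both cases below is to produce a third nontrivial left ideal whose degree strictly exceeds $k$, contradicting regularity.

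If $I_1 \cup I_2 \neq S$, set $L = I_1 \cup I_2$, which is a nontrivial left ideal distinct from $I_1$ and $I_2$ (otherwise $I_1 \cap I_2$ would be nontrivial). For every neighbour $J$ of $I_1$ one has $J \cap L \supseteq J \cap I_1$ nontrivial, so $J$ is adjacent to $L$. Hence the neighbourhood of $L$ contains $(N(I_1)\setminus\{L\})\cup\{I_1,I_2\}$, and since $I_1, I_2 \notin N(I_1)$ this forces $\deg(L) \ge k+1$, the desired contradiction.

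If instead $I_1 \cup I_2 = S$, the naive candidate $L = S$ is no longer a vertex, and I enlarge $I_1$ directly. Note first that $I_1$ and $I_2$ cannot both be minimal, for then Theorem \ref{two minimalintersection} would make $\Gamma(S)$ disconnected, contrary to assumption. Assume, without loss of generality, that $I_2$ is not minimal and pick a nontrivial left ideal $M \subsetneq I_2$. Put $J := I_1 \cup M$, a nontrivial left ideal distinct from $I_1, I_2$ (using that $M \nsubseteq I_1$ and $I_1 \nsubseteq I_2$) and from $S$, with $I_1 \subseteq J$ and $J$ adjacent to $I_2$ through $M$. Transferring $I_1$-neighbours to $J$-neighbours exactly as above, the neighbourhood of $J$ contains $(N(I_1)\setminus\{J\})\cup\{I_1,I_2\}$, so $\deg(J) \ge k+1$, again contradicting regularity.

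The main obstacle is the second case: the natural candidate $I_1 \cup I_2$ equals $S$ and so fails to be a vertex of $\Gamma(S)$, and one has to manufacture a proper enlargement of $I_1$ by hand. This is possible precisely because Theorem \ref{two minimalintersection} together with the connectedness of $\Gamma(S)$ rules out the degenerate configuration in which $I_1$ and $I_2$ are both minimal; once that obstruction is removed, the transfer argument used in Case 1 goes through verbatim.
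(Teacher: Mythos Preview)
Your argument is correct. Both your proof and the paper's rest on the same core observation: if two nontrivial left ideals have trivial intersection, then a suitable union containing one of them has strictly more neighbours, contradicting regularity. The difference lies in the choice of the nonadjacent pair. The paper works with two \emph{minimal} left ideals $I_1,I_2$; since ``not null'' forces $\Gamma(S)$ to be connected (Corollary~\ref{null graphintersection}), Theorem~\ref{two minimalintersection} guarantees $I_1\cup I_2\neq S$, so only your Case~1 is needed, and the paper then finishes by appealing to the preceding lemma (unique minimal left ideal $\Leftrightarrow$ complete). You instead start from an arbitrary nonadjacent pair, which obliges you to handle the possibility $I_1\cup I_2=S$ by constructing $J=I_1\cup M$; the payoff is that your argument is self-contained and does not depend on $S$ having any minimal left ideals, a hypothesis the paper's concluding step uses implicitly. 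One cosmetic point: what you call the ``forward direction'' is in fact the converse implication (null or complete $\Rightarrow$ regular); the labelling is swapped, though the mathematics is unaffected.
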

\begin{proof}
        First suppose that $\Gamma(S)$ is not a null graph. Let if possible, $S$ has at least two minimal left ideals, namely $I_1, I_2$. Since $\Gamma(S)$ is not a null graph then $I_1$ and $I_1 \cup I_2$ forms a nontrivial left ideals of $S$ and $I_1 \sim (I_1 \cup I_2)$. Suppose $J$ is any nontrivial left ideal of $S$ such that $J \sim I_1$ then $J \sim (I_1 \cup I_2)$. It follows that every nontrivial left ideal of $S$ which is adjacent with $I_1$ is also adjacent with $(I_1 \cup I_2)$ and $I_2 \sim I_1 \cup I_2$ but $I_2 \nsim I_1$ implies that $deg(I_1) < deg(I_1 \cup I_2)$, a contradiction. Therefore, $\Gamma(S)$ is a complete graph.
\end{proof}
Next we classify the semigroups such that the diameter of intersection ideal graph $\Gamma(S)$ is two.

\begin{theorem}\label{diameter2clasification}
 Let $S$ be a semigroup having minimal left ideals. Then for a connected graph $\Gamma(S)$, we have  $diam(\Gamma(S)) = 2$ if and only if $S$ has at least two minimal left ideals. 
\end{theorem}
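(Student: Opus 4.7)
The statement follows essentially by combining Theorem \ref{diameter2} with the lemma characterising when $\Gamma(S)$ is complete, so the plan is to bracket the diameter from above and below.

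For the ``only if'' direction, my plan is to argue by contrapositive. Suppose $S$ has a unique minimal left ideal. Then by the earlier lemma (completeness of $\Gamma(S)$ in the presence of a unique minimal left ideal), $\Gamma(S)$ is complete, so either it has a single vertex and $diam(\Gamma(S))=0$, or it has at least two vertices and $diam(\Gamma(S))=1$. In either case $diam(\Gamma(S)) \neq 2$, contradicting the hypothesis.

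For the ``if'' direction, assume $S$ has at least two minimal left ideals $I_1, I_2$. These are in particular two distinct nontrivial left ideals, so they are two distinct vertices of $\Gamma(S)$. By Remark \ref{disjoint intersection minimal}, $I_1 \cap I_2$ is trivial, hence $I_1 \nsim I_2$ in $\Gamma(S)$. Thus $d(I_1, I_2) \geq 2$, which forces $diam(\Gamma(S)) \geq 2$. Since $\Gamma(S)$ is assumed to be connected, Theorem \ref{diameter2} yields $diam(\Gamma(S)) \leq 2$. Combining these bounds gives $diam(\Gamma(S)) = 2$, as desired.

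There is no genuine obstacle in this argument: the heavy lifting has already been done in Theorem \ref{diameter2} (upper bound on the diameter of a connected $\Gamma(S)$) and in the completeness lemma (forcing non-completeness once two disjoint minimal left ideals appear, via Remark \ref{disjoint intersection minimal}). The only thing to keep straight is that the existence of two distinct minimal left ideals simultaneously supplies two distinct vertices of $\Gamma(S)$ and a certified non-edge between them, which is exactly what is needed to rule out $diam(\Gamma(S)) \leq 1$.
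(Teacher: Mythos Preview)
Your proof is correct and follows essentially the same route as the paper: for the ``only if'' direction you invoke the completeness lemma where the paper re-derives its content inline (the unique minimal left ideal sits inside every nontrivial left ideal, forcing all intersections to be nontrivial), and for the ``if'' direction both arguments use Remark~\ref{disjoint intersection minimal} to produce a non-edge and Theorem~\ref{diameter2} for the upper bound.
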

\begin{proof}
Suppose that $diam(\Gamma(S)) = 2$. Assume that $I_1$ is the only minimal left ideal of $S$. Since $I_1$ is unique minimal left ideal then it is contained in all other nontrivial left ideals of $S$. Therefore, for any nontrivial left ideals $J, K$, we have $I_1 \subset (J \cap K)$. Consequently, $d(J, K) = 1$ for any $J, K \in V(\Gamma(S))$. Therefore $S$ has at least two minimal left ideals. Conversely suppose that $S$ has at least two minimal left ideals, viz. $I_1, I_2$. Then by Remark \ref{disjoint intersection minimal}, we have $I_1 \nsim I_2$. Consequently, by Theorem \ref{diameter2}, $d(I_1, I_2) = 2$. Thus,  $diam(\Gamma(S)) = 2$. 
\end{proof}

\section{Invariants of  $\Gamma(S)$}
In this section, first we obtain the girth of $\Gamma(S)$. Then we discuss planarity and perfectness of $\Gamma(S)$. Also we classify the semigroup $S$ such that  $\Gamma(S)$ is bipartite, star graph and tree, respectively. Further, we investigate other graph invariants viz. clique number, independence number and  strong metric dimension of $\Gamma(S)$.
\begin{theorem} \label{girthofintersection}
Let $S$ be a semigroup such that $\Gamma(S)$ contains a cycle. Then $g(\Gamma(S)) = 3$.
\end{theorem}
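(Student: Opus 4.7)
The plan is to show that whenever $\Gamma(S)$ contains a cycle it must contain a triangle, forcing $g(\Gamma(S))=3$. I would extract from an arbitrary cycle three consecutive vertices $I_1\sim I_2\sim I_3$ with $I_1\neq I_3$, and try to build a triangle either on this triple or on one obtained from it by adjoining a single further left ideal.

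The easy case is $I_1\cap I_3$ nontrivial, in which $\{I_1,I_2,I_3\}$ is itself a triangle. Otherwise assume $I_1\cap I_3$ is trivial. A preliminary observation is that $I_2\not\subseteq I_1$, for otherwise $I_2\cap I_3\subseteq I_1\cap I_3$ would be trivial, contradicting the edge $I_2\sim I_3$; symmetrically $I_2\not\subseteq I_3$. Next I would split on the comparison between $I_1$ and $I_2$. When $I_1$ and $I_2$ are incomparable, the intersection $I_1\cap I_2$ is a nontrivial left ideal distinct from both, hence a vertex of $\Gamma(S)$ adjacent to $I_1$ and $I_2$, producing a triangle. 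When $I_1\subseteq I_2$, apply the same dichotomy to $(I_2,I_3)$: incomparability yields a triangle via $I_2\cap I_3$, while $I_2\subseteq I_3$ is ruled out because it would force $I_1\subseteq I_3$, making $I_1\cap I_3=I_1$ nontrivial, contrary to assumption.

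The remaining configuration is $I_1\subseteq I_2$ and $I_3\subseteq I_2$, in which every pairwise intersection among $I_1,I_2,I_3$ collapses to some $I_i$. Here I would switch from intersection to union: $J:=I_1\cup I_3$ is a left ideal (using that a union of left ideals is a left ideal, recalled in Section~2), is contained in $I_2\subsetneq S$, and strictly contains each of $I_1,I_3$ (triviality of $I_1\cap I_3$ together with nontriviality of $I_1,I_3$ rules out either containing the other). If $J\neq I_2$ then $J$ is a new vertex adjacent to both $I_1$ and $I_2$, giving the triangle $\{I_1,I_2,J\}$. If $J=I_2$, so that $I_2=I_1\cup I_3$, I would invoke the fourth vertex $I_4$ of the surrounding cycle: since $I_1\not\sim I_3$ the cycle has length at least $4$, so such an $I_4$ exists and is distinct from $I_2$; then $I_3\sim I_4$ gives $I_2\cap I_4\supseteq I_3\cap I_4$ nontrivial, so $I_2\sim I_4$ and $\{I_2,I_3,I_4\}$ is a triangle.

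The main obstacle I anticipate is this final configuration, where intersections among $I_1,I_2,I_3$ degenerate completely; it is the one branch that genuinely uses semigroup structure (closure of left ideals under union) and, in the subcase $I_2=I_1\cup I_3$, also the only branch that must consume an extra vertex of the surrounding cycle. All the other cases reduce to the uniform principle that two incomparable left ideals furnish a common neighbor in $\Gamma(S)$ via their intersection, so once this delicate case is handled the theorem follows immediately.
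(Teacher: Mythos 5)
Your proof is correct, but it takes a genuinely different route from the paper's. The paper organizes the argument globally, by cases on the number $n$ of minimal left ideals of $S$ ($n=0$, $1$, $2$, $n\geq 3$), exhibiting an explicit triangle in each case (e.g.\ a descending chain when $n=0$, the triangle $(I_1\cup I_2)\sim(I_2\cup I_3)\sim(I_1\cup I_3)$ when $n\geq 3$); as a by-product it also records exactly when $g(\Gamma(S))=\infty$. You instead argue locally: from three consecutive vertices $I_1\sim I_2\sim I_3$ of an arbitrary cycle you manufacture a triangle, using $I_1\cap I_2$ or $I_2\cap I_3$ as a common neighbour when the relevant pair is incomparable, the union $I_1\cup I_3$ in the nested configuration $I_1, I_3\subsetneq I_2$, and a fourth cycle vertex in the single degenerate subcase $I_2=I_1\cup I_3$. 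Each branch checks out (the intersection of two adjacent vertices is a nonempty, hence genuine, nontrivial left ideal and is distinct from both when they are incomparable; the union is a left ideal by the fact recalled in Section~2; and since $I_1\nsim I_3$ the ambient cycle has length at least $4$, so $I_4$ exists and $I_3\cap I_4\subseteq I_2\cap I_4$ forces $I_2\sim I_4$). Your approach makes no appeal to minimal left ideals and so is arguably more uniform and self-contained for the statement as literally given; the paper's structural case analysis buys the extra information about which semigroups have acyclic $\Gamma(S)$, which your local argument does not address (and is not asked to).
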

\begin{proof}
If $\Gamma(S)$ is disconnected or a tree, then clearly $g(\Gamma(S)) = \infty$. Suppose that the semigroup $S$ has $n$ minimal left ideals. Now we prove the result through following cases.

\noindent\textbf{Case 1.} $n = 0$. If $S$ has no nontrivial left ideals then there is nothing to prove. Otherwise, there exists a chain of nontrivial left ideals of $S$ such that $I_1 \supset I_2 \supset \cdots \supset I_k \supset \cdots$. Thus, $g(\Gamma(S)) = 3$.

\noindent\textbf{Case 2.} $n = 1$. Suppose that $I_1$ is the only minimal left ideal of $S$. Since $I_1$ is unique minimal left ideal then it is contained in all other nontrivial left ideals of $S$. Therefore, for any nontrivial left ideals $I, J$, we have $I_1 \subset I \cap J \neq \emptyset$.  If $S$ has at least three nontrivial left ideals then $g(\Gamma(S)) = 3$. Otherwise, $g(\Gamma(S)) = \infty$.

\noindent\textbf{Case 3.} $n = 2$. Let $I_1, I_2$ be two minimal left ideals of $S$. If $I_1 \cup I_2 = S$ then by Theorem \ref{two minimalintersection} and Corollary \ref{null graphintersection}, $g(\Gamma(S)) = \infty$. If $I_1 \cup I_2 \neq S$, then $J = I_1 \cup I_2$ is a nontrivial left ideal of $S$. If $S$ has only these three, namely  $I_1, I_2$ and $J$, left ideals then we obtain $I_1 \sim J \sim I_2$ so that $g(\Gamma(S)) = \infty$. Now suppose that  $S$ has a nontrivial left ideal $K$ other than $I_1, I_2$ and $J$. Since  $I_1, I_2$ are minimal left ideals of $S$ we have either $I_1 \subset K$ or $I_2 \subset K$. Without loss of generality, assume that $I_1 \subset K$, then we get a triangle $I_1 \sim K \sim J \sim I_1$. It follows that $g(\Gamma(S)) = 3$.

 \noindent\textbf{Case 4.} $n \geq 3$. Let $I_1, I_2, I_3$ be the minimal left ideals of $S$. Then we have a cycle $(I_1 \cup I_2) \sim (I_2 \cup I_3) \sim (I_1 \cup I_3) \sim (I_1 \cup I_2)$ of length 3. Thus, $g(\Gamma(S)) = 3$. 
\end{proof}

	Let ${\rm Min}(S)$ (${\rm Max}(S)$) be the set of all minimal (maximal) left ideals of $S$. For a nontrivial left ideal $I_{{i_1}{i_2}\cdots{i_k}}$, we mean $I_{i_1} \cup I_{i_2} \cup \cdots \cup I_{i_k}$, where $I_{i_1}, I_{i_2}, \cdots, I_{i_k}$ $\in {\rm Min}(S)$ 
	\begin{theorem}
		For the graph $\Gamma(S)$, we have the following results:
		
		\begin{enumerate}
		 \item[{\rm(i)}] If $\Gamma(S)$ is  planar  then $| {\rm Min}(S) | \leq 3$.
			\item [{\rm(ii)}] For $S = I_{{i_1}{i_2}\cdots{i_n}}$, we have  $\Gamma(S)$ is planar if and only if $n \leq 3$.  
	\end{enumerate}
	\end{theorem}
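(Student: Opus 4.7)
The plan is to prove (i) by exhibiting a $K_5$ subgraph in $\Gamma(S)$ whenever $|{\rm Min}(S)|\geq 4$, and then to deduce the forward direction of (ii) from (i) while handling the converse by a direct case analysis for $n\in\{1,2,3\}$.

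For (i), I would fix four minimal left ideals $I_1,I_2,I_3,I_4$ and consider unions $I_A:=\bigcup_{i\in A}I_i$ for index sets $A\subseteq\{1,2,3,4\}$. By Remark \ref{disjoint intersection minimal} the $I_i$ are pairwise disjoint, so $I_A\cap I_B=\bigcup_{i\in A\cap B}I_i$, which is nontrivial if and only if $A\cap B\neq\varnothing$; moreover each $I_A$ with $A\subsetneq\{1,2,3,4\}$ is a proper nontrivial left ideal (it misses the nonempty ideal $I_j$ for any $j\notin A$). Hence it suffices to display five subsets of $\{1,2,3,4\}$ that pairwise intersect. I would use $\{1,2\},\{1,2,3\},\{1,2,4\},\{1,3,4\},\{2,3,4\}$: any two of the four $3$-subsets share at least two elements, and $\{1,2\}$ meets each $3$-subset (in $\{1,2\}$, $\{1,2\}$, $\{1\}$, $\{2\}$ respectively). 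The corresponding five left ideals form a $K_5$ in $\Gamma(S)$, so $\Gamma(S)$ is nonplanar by Kuratowski's theorem.

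For (ii), the implication ``$\Gamma(S)$ planar $\Rightarrow n\leq 3$'' is immediate from (i). For the converse, I would invoke Remark \ref{everynontrivial left ideal is union}, which says every nontrivial left ideal of $S=I_{i_1\cdots i_n}$ is a union of the $I_{i_j}$. If $n=1$, the only nontrivial left ideal is $I_1$ itself, so $\Gamma(S)$ is a single vertex. If $n=2$, the only proper nontrivial left ideals are $I_1,I_2$, which are disjoint by Remark \ref{disjoint intersection minimal}, so $\Gamma(S)$ is two isolated vertices. If $n=3$, the vertices of $\Gamma(S)$ are exactly $I_1,I_2,I_3,I_{12},I_{13},I_{23}$, with adjacencies determined by the intersection pattern of their index sets: the three pair-unions $I_{12},I_{13},I_{23}$ form a triangle, each $I_i$ is adjacent to precisely the two pair-unions whose index set contains $i$, and distinct minimal ideals are nonadjacent. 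I would then describe an explicit plane drawing: draw the triangle on $I_{12},I_{13},I_{23}$, and place each $I_i$ in the outer face near the edge joining the two pair-unions containing it, joining $I_i$ to its two neighbors without crossings.

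The only substantive step is the $K_5$-construction in (i); once it is in hand, everything else reduces to a short case inspection using the previous remarks.
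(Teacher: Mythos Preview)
Your proposal is correct and follows essentially the same strategy as the paper: exhibit a $K_5$ when there are at least four minimal left ideals, then handle the small cases directly. The paper's $K_5$ is the set $\{I_1, I_{12}, I_{123}, I_{14}, I_{124}\}$ rather than your $\{I_{12}, I_{123}, I_{124}, I_{134}, I_{234}\}$, but either choice works; one minor slip is your $n=1$ case, where $I_1=S$ is not a nontrivial left ideal, so $\Gamma(S)$ is the empty graph rather than a single vertex---still planar, so the conclusion is unaffected.
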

	\begin{proof}
		(i) Suppose that $| {\rm Min}(S) | =4$ with ${\rm Min}(S) = \{I_1, I_2, I_3, I_4\}$. Then note that the subgraph induced by the vertices $I_1, I_{12}, I_{123}, I_{14}$ and $I_{124}$ is isomorphic to $K_5$. Thus, $\Gamma(S)$ is nonplanar.
		
		(ii) The proof for $\Gamma(S)$ is nonplanar for $n \geq 4$ follows  from part (i).
		If $n = 2$ then by Corollary \ref{null graphintersection} and Theorem \ref{two minimalintersection}, $\Gamma(S)$ is planar. For $n= 3$, $\Gamma(S)$ is planar as shown in Figure 1.
		\begin{figure}[h!]
			\centering
			\includegraphics[width=0.2 \textwidth]{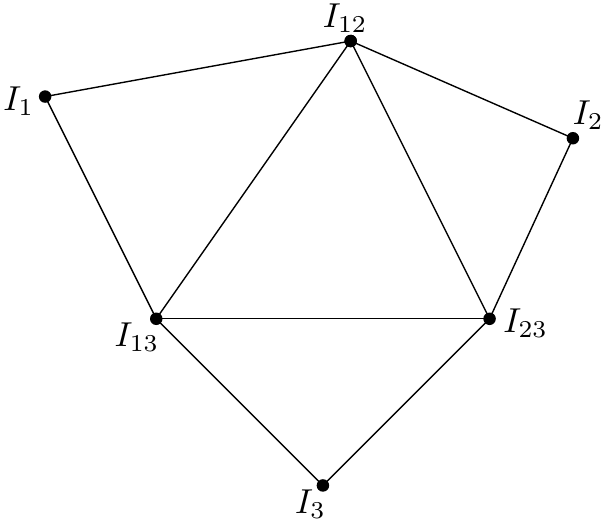}
			\caption{Planar drawing of $\Gamma(S)$ for $S = I_{123}$}
	\end{figure}
		\end{proof}
		
\begin{theorem}
		For the graph $\Gamma(S)$, we have the following results:
		
		\begin{enumerate}
		 \item[{\rm(i)}] If 
		 $\Gamma(S)$ is a perfect graph then $| {\rm Min}(S) | \leq 4$.
			\item [{\rm(ii)}] Let $S$ be the union of $n$ minimal left ideals. Then  $\Gamma(S)$ is perfect if and only if $n \leq 4$.  
	\end{enumerate}
	\end{theorem}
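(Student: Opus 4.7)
For part (i), the plan is to invoke the strong perfect graph theorem (Theorem~\ref{strongperfecttheorem}) by exhibiting an induced $C_5$ in $\Gamma(S)$ whenever $|{\rm Min}(S)| \geq 5$. Picking five distinct minimal left ideals $I_1,\ldots,I_5$, I would consider the ``consecutive unions'' $J_1 = I_1 \cup I_2$, $J_2 = I_2 \cup I_3$, $J_3 = I_3 \cup I_4$, $J_4 = I_4 \cup I_5$, $J_5 = I_5 \cup I_1$. Using Remark~\ref{disjoint intersection minimal}, each $J_i$ is a proper nontrivial left ideal (the remaining minimal ideals are nonempty and disjoint from $J_i$, so $J_i \neq S$), the five $J_i$'s are pairwise distinct, consecutive $J_i$'s share the single minimal ideal $I_{i+1}$ and are therefore adjacent, and non-consecutive $J_i$'s are built from four pairwise-disjoint minimal ideals and are therefore non-adjacent. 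This yields the desired induced $C_5$, so $\Gamma(S)$ is not perfect.

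For part (ii), the ``only if'' direction follows from (i) applied with $n \geq 5$. For the ``if'' direction, I would first invoke Remark~\ref{everynontrivial left ideal is union} to identify $\Gamma(S)$ with the graph whose vertices are the nonempty proper subsets of $[n]$ and in which $A \sim B$ iff $A \cap B \neq \emptyset$. The cases $n = 1$ (no vertices) and $n = 2$ (two isolated vertices, by Theorem~\ref{two minimalintersection}) are trivially perfect, so only $n \in \{3,4\}$ requires work. There I would again apply the strong perfect graph theorem and rule out every induced odd hole or odd antihole of length $\ell \geq 5$ by a uniform double-counting argument.

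Concretely, suppose $A_1,\ldots,A_\ell$ is a putative induced $C_\ell$ or $\overline{C_\ell}$ in $\Gamma(S)$. For each $k \in [n]$, set $S_k = \{i : k \in A_i\}$; any two indices in $S_k$ yield intersecting and hence adjacent $A_i$'s, so $S_k$ is a clique in the induced subgraph. This forces $|S_k| \leq \omega(C_\ell) = 2$ in the hole case and $|S_k| \leq \omega(\overline{C_\ell}) = \lfloor \ell/2 \rfloor$ in the antihole case. On the other hand, every edge of the induced subgraph arises from at least one shared element, giving $\sum_{k=1}^{n}\binom{|S_k|}{2} \geq |E|$, where $|E| = \ell$ for a hole and $|E| = \ell(\ell-3)/2$ for an antihole. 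The hole case collapses immediately to $n \geq \ell$, contradicting $n \leq 4 < 5 \leq \ell$. For the antihole case, writing $\ell = 2m+1$ with $m \geq 2$, the inequality becomes $nm(m-1)/2 \geq (2m+1)(m-1)$, i.e.\ $nm/2 \geq 2m+1$, which fails for every $n \leq 4$ since then $nm/2 \leq 2m$. The main obstacle I anticipate is the antihole case, where the clique bound is far weaker than in the hole case; it is precisely the strict gap between $nm/2 \leq 2m$ and $2m+1$ afforded by $n \leq 4$ that closes the argument.
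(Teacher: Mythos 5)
Your proof is correct. Part (i) and the overall skeleton of part (ii) coincide with the paper's: the paper exhibits the same induced $5$-cycle $I_{12}\sim I_{23}\sim I_{34}\sim I_{45}\sim I_{15}\sim I_{12}$ and appeals to the strong perfect graph theorem, and likewise disposes of $n=2$ via the null-graph corollary. The real divergence is in the cases $n\in\{3,4\}$ of part (ii): the paper's argument there is purely by inspection of the drawn graphs (Figures 1 and 2), with no justification offered that the $14$-vertex graph for $n=4$ contains no odd hole or antihole, whereas you replace this with a clean double count. Setting $S_k=\{i: k\in A_i\}$, each $S_k$ is a clique of the putative hole or antihole, so $|S_k|\le 2$ (hole) or $|S_k|\le m$ (antihole of length $2m+1$), and comparing $\sum_{k=1}^{n}\binom{|S_k|}{2}$ with the edge count forces $n\ge\ell\ge 5$ in the hole case and $nm/2\ge 2m+1$ in the antihole case, both impossible for $n\le 4$. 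What the paper's route buys is brevity and a picture; what yours buys is a verifiable argument, uniform in $\ell$, which moreover pinpoints why $4$ is the exact threshold (the slack between $nm/2\le 2m$ and $2m+1$). Your extra care in part (i) --- checking that each $I_i\cup I_{i+1}$ is a proper ideal, that the five unions are distinct, and that non-consecutive ones are non-adjacent so the cycle is genuinely induced --- fills small gaps the paper leaves implicit, as does your observation that the identification of vertices with nonempty proper subsets of $[n]$ is a graph isomorphism.
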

	\begin{proof}
	(i) Suppose that $| {\rm Min}(S) | = 5$ with ${\rm Min}(S) = \{I_1, I_2, I_3, I_4, I_5 \}$. Note that $I_{12} \sim I_{23} \sim I_{34} \sim I_{45} \sim I_{15} \sim I_{12}$ induces a cycle of length 5. Then by Theorem \ref{strongperfecttheorem}, $\Gamma(S)$ is not  perfect.
	
	(ii) The proof for $\Gamma(S)$ is not a perfect graph for $n \geq 5$ follows  from part (i). If $n = 2$ then by Corollary \ref{null graphintersection} and Theorem \ref{disconnectedintersection}, $\Gamma(S)$ is disconnected. Thus, being a null graph, $\Gamma(S)$ is perfect. For $n \in \{ 3, 4\}$, we show that $\Gamma(S)$ does not contain a hole or an antihole of odd length at least five (cf. Theorem \ref{strongperfecttheorem}). If $n =3$, $\Gamma(S)$ is perfect as shown in Figure 1. If $n = 4$ then  from Figure 2 note that  $\Gamma(S)$ does not contain a hole or an antihole of odd length at least five. 
	\begin{figure}[h!]
			\centering
			\includegraphics[width=0.5 \textwidth]{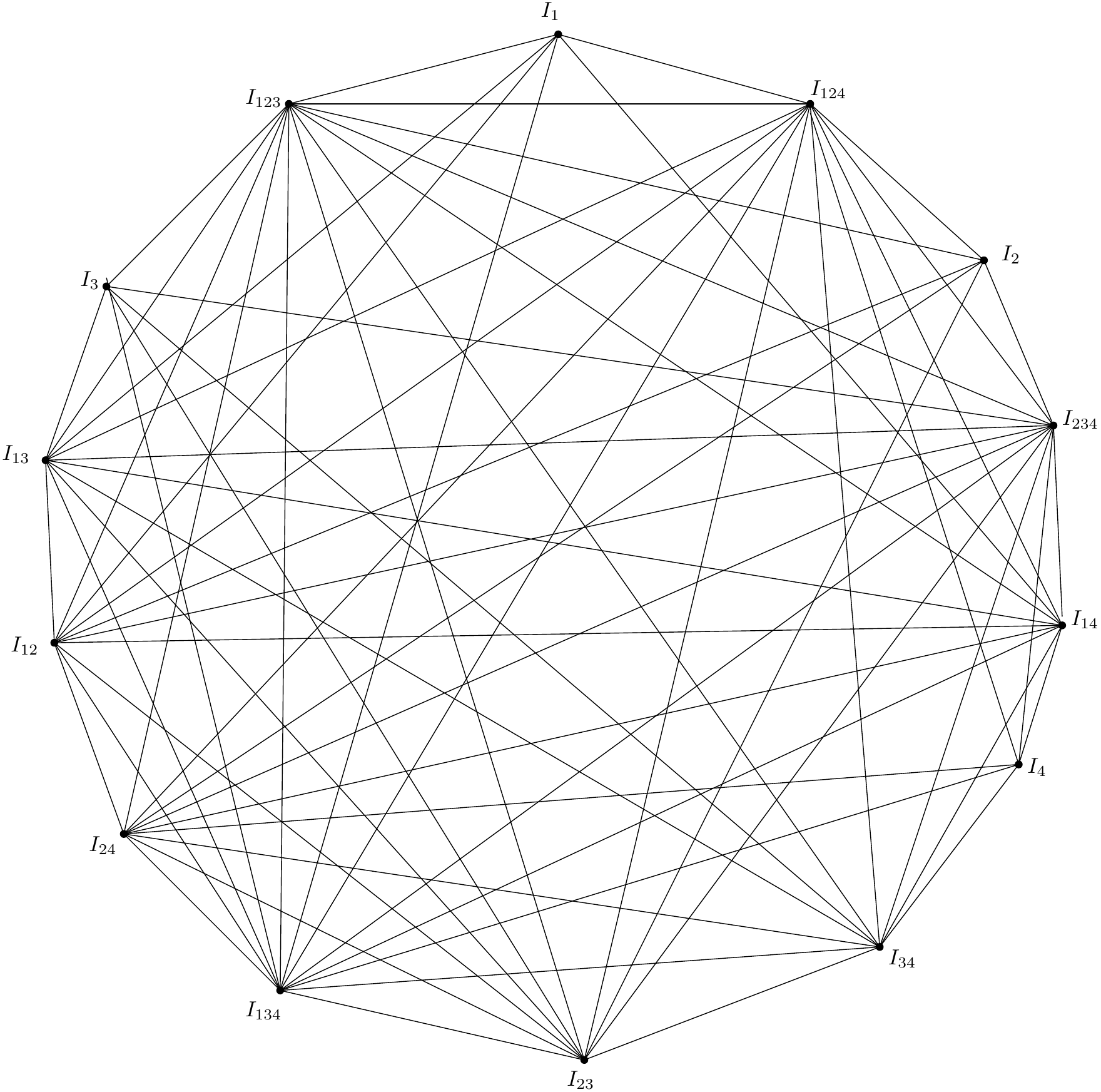}
			\caption{}
	\end{figure}

	\end{proof}
	
	\begin{theorem}
	Let $S$ be a semigroup having minimal left ideals such that $V(\Gamma(S)) > 1$. Then the following conditions are equivalent:
	
	\begin{enumerate}
		 \item[{\rm(i)}] $\Gamma(S)$ is star graph.
		 
		 \item[{\rm(ii)}] $\Gamma(S)$ is a tree.
		 
		 \item[{\rm(iii)}] $\Gamma(S)$ is bipartite.
		 
		  \item[{\rm(iv)}] Either $S$ has exactly three nontrivial left ideals $I_1$, $I_2$ and $I_{12}$ such that $I_1$ and $I_{2}$ are minimal or $S$ has two nontrivial left ideals $I_1, I_2$ such that $I_1 \subset I_2$.
		 \end{enumerate}
	\end{theorem}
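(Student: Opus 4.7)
The plan is to establish the cyclic chain of implications $(i) \Rightarrow (ii) \Rightarrow (iii) \Rightarrow (iv) \Rightarrow (i)$. The implications $(i) \Rightarrow (ii) \Rightarrow (iii)$ are immediate from elementary graph theory, since every star $K_{1,n}$ is a tree and every tree is bipartite. For $(iv) \Rightarrow (i)$ I would simply verify the two configurations directly: in the three-ideal case, Remark \ref{disjoint intersection minimal} gives $I_1 \cap I_2 = \emptyset$, while $I_{12}$ meets both $I_1$ and $I_2$, so $\Gamma(S)$ is exactly the star $K_{1,2}$; in the two-ideal case with $I_1 \subsetneq I_2$, the graph is the single edge $K_{1,1}$.

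The substance of the argument is $(iii) \Rightarrow (iv)$. Assume $\Gamma(S)$ is bipartite. A bipartite graph contains no triangles, and Theorem \ref{girthofintersection} forces the girth to be $3$ whenever a cycle exists, so $\Gamma(S)$ must be acyclic. I would first bound the number of minimal left ideals: if there were three distinct minimal left ideals $I_1, I_2, I_3$, then by Remark \ref{disjoint intersection minimal} they are pairwise disjoint and, since each $I_k$ is disjoint from the union of the other two, no pairwise union equals $S$; hence $I_{12}, I_{13}, I_{23}$ are all proper nontrivial left ideals and form a triangle (exactly as in Case 4 of Theorem \ref{girthofintersection}), contradicting bipartiteness. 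Thus $|{\rm Min}(S)| \in \{1,2\}$.

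A case split then finishes the job. If $|{\rm Min}(S)| = 1$, the unique minimal left ideal is contained in every other nontrivial left ideal, so $\Gamma(S)$ is complete; being simultaneously complete and bipartite on more than one vertex forces exactly two nontrivial left ideals $I_1 \subsetneq I_2$, yielding the second configuration of (iv). If $|{\rm Min}(S)| = 2$ with minimal ideals $I_1, I_2$ and $I_1 \cup I_2 = S$, then by Remark \ref{everynontrivial left ideal is union} the nontrivial left ideals are precisely $I_1, I_2$; this degenerate null graph on two vertices satisfies (iii) trivially but none of (i), (ii), (iv), and must be read as excluded under the implicit convention that $\Gamma(S)$ has at least one edge. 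Otherwise $I_{12} := I_1 \cup I_2 \subsetneq S$ is itself a nontrivial left ideal. The main obstacle, and the heart of the argument, is ruling out any further nontrivial left ideal $K$: since $S$ possesses minimal left ideals, $K$ must contain some minimal left ideal, say $I_1$, whence $I_1 \sim K$, $I_1 \sim I_{12}$, and $K \cap I_{12} \supseteq I_1$ gives $K \sim I_{12}$, producing the forbidden triangle $\{I_1, I_{12}, K\}$. With no such $K$ possible, $S$ has exactly the three nontrivial left ideals $I_1, I_2, I_{12}$, which is the first configuration of (iv).
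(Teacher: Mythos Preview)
Your argument follows essentially the same route as the paper's: bound $|{\rm Min}(S)|$ by exhibiting the triangle $I_{12}\sim I_{13}\sim I_{23}\sim I_{12}$ when there are three minimal left ideals, then split into the cases $|{\rm Min}(S)|=1$ (complete graph, hence at most two vertices) and $|{\rm Min}(S)|=2$ (rule out a fourth ideal $K$ via the triangle $I_1\sim I_{12}\sim K\sim I_1$). The paper organises the implications slightly differently, proving (ii)$\Rightarrow$(iv) in detail and then dismissing (iii)$\Rightarrow$(iv) with ``similar lines,'' but the substance is identical to yours.

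Where you go beyond the paper is in the case $S=I_1\cup I_2$ with both $I_i$ minimal. You correctly observe that $\Gamma(S)$ is then the edgeless graph on two vertices, which is bipartite yet satisfies none of (i), (ii), (iv). The paper's proof of (ii)$\Rightarrow$(iv) disposes of this case by noting that a disconnected graph is not a tree; but its claim that (iii)$\Rightarrow$(iv) follows ``in the similar lines'' overlooks that a disconnected graph \emph{can} be bipartite. So the gap you flag is real and lies in the paper's statement, not in your argument: as written, the equivalence fails on exactly this example unless one reads in a connectedness (or nonemptiness-of-edge-set) hypothesis. Your proposed reading is the natural repair. One minor remark: your appeal to Theorem~\ref{girthofintersection} to conclude acyclicity is harmless but unnecessary, since every contradiction you derive is already a triangle.
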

	\begin{proof}
	  We prove (ii), (iii) $\Rightarrow$ (iv). The proof of remaining parts is straightforward. Suppose  $\Gamma(S)$ is a tree. Then clearly $| {\rm Min}(S) | \leq 2$. Otherwise, for minimal left ideals $I_1, I_2, I_3$ we have $I_{12} \sim I_{13} \sim I_{23} \sim I_{12}$ a cycle, a contradiction. Suppose that $| {\rm Min}(S) | = 1$. Let $I_1$ be the unique minimal left ideal of $S$. Consequently, $I_1$ is contained in all other nontrivial left ideals of $S$. If $S$ has at least three nontrivial left ideals then we get a cycle, a contradiction. Thus $|V(\Gamma(S))| = 2$. Now we assume that $| {\rm Min}(S) | = 2$. Let $I_1, I_2$ be two minimal left ideals of $S$. Let if possible, $S = I_{12}$. Then by Corollary \ref{null graphintersection} and Theorem \ref{two minimalintersection}, $\Gamma(S)$ is disconnected so is not a tree. Thus $S \neq I_{12}$.  Then $J = I_{12}$ is a nontrivial left ideal of $S$. 
	  Now if $S$ has a nontrivial left ideal $K$ other than $I_1, I_2$ and $J$. Without loss of generality, assume that $I_1 \subset K$ then we get a cycle $I_1 \sim I_{12} \sim K \sim I_1$, a contradiction. Thus, for $S \neq I_{12}$, we have $V(\Gamma(S)) = \{I_1, I_2, I_{12}  \}$. 
	  
	   (iii) $\Rightarrow$ (iv). If $\Gamma(S)$ is bipartite then we have $| {\rm Min}(S) | \leq 2$. In the similar lines of the work discussed above, (iv) holds.
	\end{proof}

\begin{theorem}
Let $S$ be a semigroup with $n$ minimal left ideals. Then the following results hold:
\begin{enumerate}
		 \item[{\rm(i)}] If $S \neq I_{12 \cdots n}$ then $\gamma(\Gamma(S)) = 1$.
		 
		\item[{\rm(ii)}] If $S = I_{12 \cdots n}$ then $\gamma(\Gamma(S)) = 2$.
\end{enumerate}
\end{theorem}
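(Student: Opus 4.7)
The plan for part (i) is to exhibit a single dominating vertex. Since $S\neq I_{12\cdots n}$, the union $J=I_1\cup I_2\cup\cdots\cup I_n$ is a proper subset of $S$, and since there is at least one minimal left ideal it is also nonempty, so $J$ is a nontrivial left ideal and hence a vertex of $\Gamma(S)$. The preliminaries record that, whenever $S$ has a minimal left ideal, every nontrivial left ideal contains at least one minimal left ideal $I_k$. For any vertex $K\neq J$ we can then find $k$ with $I_k\subseteq K$, and since $I_k\subseteq J$ as well, we get $I_k\subseteq K\cap J$, so $K\sim J$. Therefore $\{J\}$ is a dominating set, giving $\gamma(\Gamma(S))\le 1$; as the graph has at least one vertex, $\gamma(\Gamma(S))=1$.

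For part (ii) the combinatorial structure is what makes life simple. When $S=I_{12\cdots n}$, Remark~\ref{everynontrivial left ideal is union} says every nontrivial left ideal is a union of some minimal left ideals, so the vertices of $\Gamma(S)$ are in bijection with the nonempty proper subsets $T\subsetneq\{1,\ldots,n\}$ via $T\mapsto I_T:=\bigcup_{i\in T}I_i$; and by Remark~\ref{disjoint intersection minimal}, $I_T\sim I_{T'}$ in $\Gamma(S)$ if and only if $T\cap T'\neq\varnothing$. I plan to prove the two inequalities separately.

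To show $\gamma(\Gamma(S))\ge 2$, I take any candidate single-vertex dominator $I_T$ and let $T^c=\{1,\ldots,n\}\setminus T$. Because $T$ is a nonempty proper subset, $T^c$ is also a nonempty proper subset, so $I_{T^c}$ is a vertex of $\Gamma(S)$; and since $T\cap T^c=\varnothing$, we have $I_T\nsim I_{T^c}$, showing that $\{I_T\}$ does not dominate.

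To show $\gamma(\Gamma(S))\le 2$, I propose the dominating set $D=\{I_1,\,I_{2\,3\cdots n}\}$, which is well-defined because $n\ge 2$ in this case (if $n=1$ then $S=I_1$ has no nontrivial left ideals at all, so the statement does not apply). For any other vertex $I_U$ with $\varnothing\neq U\subsetneq\{1,\ldots,n\}$, either $1\in U$, in which case $U\cap\{1\}\neq\varnothing$ and $I_U\sim I_1$, or $1\notin U$, in which case $U\subseteq\{2,\ldots,n\}$ and $U\cap\{2,\ldots,n\}=U\neq\varnothing$, so $I_U\sim I_{2\,3\cdots n}$. Thus $D$ dominates and $\gamma(\Gamma(S))=2$. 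There is no real obstacle here; the only thing to watch is the bookkeeping on which unions are actually proper (hence genuine vertices), which is precisely what distinguishes (i) from (ii).
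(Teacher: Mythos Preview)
Your proof is correct and follows essentially the same approach as the paper: in part (i) you exhibit $J=I_{12\cdots n}$ as a dominating vertex using the fact that every nontrivial left ideal contains some minimal $I_k$, and in part (ii) you rule out a single dominator via the complement $I_{T^c}$ and then use the same dominating set $D=\{I_1,I_{23\cdots n}\}$ as the paper. Your version is slightly more explicit (the paper simply asserts ``there is no dominating vertex'' without spelling out the complement argument), but the ideas are identical.
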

\begin{proof}
     (i) Suppose that $S \neq I_{12 \cdots n}$. It follows that $J = I_{12 \cdots n}$ is a nontrivial left ideal of $S$. It is well known that every nontrivial left ideal of $S$ contains at least one minimal left ideal. Consequently, for any nontrivial left ideal $K$ of $S$, we have $J \cap K$ is nontrivial. Thus, $J$ is a dominating vertex. Hence, $\gamma(\Gamma(S)) = 1$.
     
     (ii) Suppose that $S = I_{12 \cdots n}$. Note that  there is no dominating vertex in $\Gamma(S)$ so that $\gamma(\Gamma(S)) \geq 2$. Now we show that $D = \{I_1, I_{23 \cdots n}\}$ is a dominating set. Since $S$ is the union of $n$ minimal left ideals so any nontrivial left ideal of $S$ is union of these minimal left ideals (cf. Remark \ref{everynontrivial left ideal is union}). Let $J \in V(\Gamma(S)) \setminus D$ be any nontrivial left ideal of $S$. Then $J$ is union of $k$ minimal left ideals of $S$, where $1 \leq k \leq n-1$. If $I_1 \subset J$, then we are done. If $I_1 \not\subset J$ then $J$ must be union of $I_2, I_3, \ldots, I_n$. It follows that intersection of $J$ and $I_{23 \cdots n}$ is nontrivial. Consequently, $J \sim I_{23 \cdots n}$. Thus $D$ is a dominating set. This completes the proof.    
\end{proof}
\begin{theorem}
    Let $S$ be a semigroup with $n$ minimal left ideals. Then $\alpha(\Gamma(S)) = n$.
\end{theorem}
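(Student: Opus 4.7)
The plan is to prove $\alpha(\Gamma(S)) = n$ by establishing the two inequalities $\alpha(\Gamma(S)) \geq n$ and $\alpha(\Gamma(S)) \leq n$ separately, using the structural information about minimal left ideals collected in the preliminaries.

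For the lower bound, I would exhibit an explicit independent set of size $n$: namely, the collection $\{I_1, I_2, \ldots, I_n\}$ of all minimal left ideals of $S$. By Remark \ref{disjoint intersection minimal}, any two distinct minimal left ideals are disjoint, so $I_i \cap I_j$ is trivial for all $i \neq j$. Hence no two of them are adjacent in $\Gamma(S)$, and this set is independent, giving $\alpha(\Gamma(S)) \geq n$.

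For the upper bound, I would take an arbitrary independent set $\{J_1, J_2, \ldots, J_m\}$ of vertices in $\Gamma(S)$ and show $m \leq n$. The key fact, recorded in the preliminaries, is that whenever $S$ admits minimal left ideals, every nontrivial left ideal of $S$ contains at least one minimal left ideal. Pick for each $J_i$ a minimal left ideal $I_{k_i} \subseteq J_i$. I claim the indices $k_1, \ldots, k_m$ are pairwise distinct. Indeed, if $k_i = k_j$ for some $i \neq j$, then $I_{k_i} \subseteq J_i \cap J_j$ would be a nontrivial subset of the intersection, forcing $J_i \sim J_j$ in $\Gamma(S)$ and contradicting independence. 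Consequently the map $J_i \mapsto I_{k_i}$ is injective into $\mathrm{Min}(S)$, which has cardinality $n$, so $m \leq n$.

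Combining both bounds yields $\alpha(\Gamma(S)) = n$. I do not expect any serious obstacle: the argument is essentially a counting argument built on Remark \ref{disjoint intersection minimal} and the fact that every nontrivial left ideal of $S$ contains a minimal one. The only small point to be careful about is to invoke the hypothesis that $S$ has minimal left ideals at the outset (so that the "every nontrivial left ideal contains a minimal one" fact can be applied), which is implicit in the statement since $n$ denotes the number of minimal left ideals.
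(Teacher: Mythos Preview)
Your proof is correct and follows essentially the same approach as the paper: both establish the lower bound by noting $\mathrm{Min}(S)$ is independent (via Remark~\ref{disjoint intersection minimal}), and both establish the upper bound by a pigeonhole argument on the minimal left ideals contained in the members of an independent set. Your injection $J_i \mapsto I_{k_i}$ is a slight streamlining of the paper's version, which instead fixes one $I \in U$ containing $k$ minimal left ideals and argues $|U| \le n-k+1$, but the underlying idea is identical.
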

\begin{proof}
  Let ${\rm Min}(S) = \{{I_{i_1} : i_1 \in [n]}\}$ be the set of all minimal left ideals of S. Then, by Remark \ref{disjoint intersection minimal}, ${\rm Min}(S)$ is an independent set of $\Gamma(S)$. It follows that $\alpha(\Gamma(S)) \geq n$. Now we prove that for any arbitrary independent set $U$, we have $|U| \leq n$. Assume that $I \in V(\Gamma(S))$ such that $I \in U$. Since every nontrivial left ideal contains at least one minimal left ideal. Without loss of generality, assume that  $I_{{i_1}{i_2}\cdots{i_k}} \subseteq I$ for some $i_1,i_2, \cdots ,i_k 
 \in [n]$. Then note that $|U| \leq n-k+1$. Otherwise, there exist at least two nontrivial left ideals which are adjacent, a contracdiction. Consequently, we have $|U| \leq n$. Thus, $\alpha(\Gamma(S)) = n$.      
\end{proof}
\begin{lemma}
    Let $S$ be a semigroup with $n~ (\geq 3)$ minimal left ideals. Then there exists a clique in $\Gamma(S)$ of size $n$. 
\end{lemma}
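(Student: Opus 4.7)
The plan is to exhibit an explicit $n$-element clique built from the minimal left ideals themselves. Write $I_1, I_2, \ldots, I_n$ for the minimal left ideals of $S$ and, for each $j \in \{2, 3, \ldots, n\}$, set $J_j := I_1 \cup I_j$, which is again a left ideal of $S$ since a union of two left ideals is a left ideal. I will show that $C := \{I_1, J_2, J_3, \ldots, J_n\}$ is a clique of size $n$ in $\Gamma(S)$.

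The first step is to check that the $n$ sets listed in $C$ are pairwise distinct nontrivial (proper) left ideals, so that they genuinely are vertices of $\Gamma(S)$. Distinctness is immediate from Remark~\ref{disjoint intersection minimal}: $I_j \subseteq J_j$ but $I_j \cap I_1 = \emptyset$, so $J_j \neq I_1$, and the same disjointness separates $J_j$ from $J_k$ when $j \neq k$ (any point of $I_j$ lies in $J_j$ but not in $J_k$). The delicate point, and the only place where $n \geq 3$ is actually used, is the properness of each $J_j$: if $J_j = S$ then pick a third minimal left ideal $I_k$ with $k \neq 1, j$ (which exists precisely because $n \geq 3$); then $I_k \subseteq I_1 \cup I_j$, and Remark~\ref{disjoint intersection minimal} forces
\[
I_k = (I_k \cap I_1) \cup (I_k \cap I_j) = \emptyset,
\]
a contradiction.

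The second step is the pairwise adjacency, which is essentially automatic once the setup is in place: every element of $C$ contains $I_1$, so the intersection of any two members of $C$ contains the nontrivial left ideal $I_1$ and is in particular nontrivial. Hence every two vertices in $C$ are adjacent in $\Gamma(S)$, and $|C| = n$, which gives the claimed clique. The main (and really only) obstacle in the argument is the nontriviality check $J_j \neq S$; everything else is routine bookkeeping with the disjointness of distinct minimal left ideals.
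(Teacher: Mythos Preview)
Your proof is correct. You exhibit the clique $\{I_1,\ I_1\cup I_2,\ \ldots,\ I_1\cup I_n\}$, all of whose members contain $I_1$, and you correctly use $n\geq 3$ to rule out $I_1\cup I_j=S$ via a third minimal left ideal. The distinctness and adjacency checks are routine, as you say.

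The paper takes a different explicit clique: it uses the $n$ ``complementary'' unions $\mathcal{C}=\{I_{i_1 i_2\cdots i_{n-1}}:i_1,\ldots,i_{n-1}\in[n]\}$, i.e.\ each element is the union of all but one of the minimal left ideals. Any two of these share at least $n-2\geq 1$ minimal left ideals, so their intersection is nontrivial; and each is proper because the omitted minimal left ideal witnesses an element of $S$ not in it. Your construction has the virtue of using only unions of two minimal left ideals (plus $I_1$ itself), and the common factor $I_1$ makes adjacency immediate; the paper's construction is symmetric under permutations of the $I_j$ and avoids singling out one index. Both arguments hinge on the same underlying fact (Remark~\ref{disjoint intersection minimal}) to establish properness and distinctness.
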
	
\begin{proof}
Let $I_1, I_2, \ldots, I_n$ be $n$ minimal left ideals. Consider $\mathcal{C} = \{I_{{i_1}{i_2} \cdots {i_{n-1}}} : i_1, i_2, \ldots, i_{n-1} \in [n]\}$. Clearly, $|\mathcal{C}| = n$. Notice that for any $J , K \in \mathcal{C}$, we have  $J \cap K$ is nontrivial so that $J \sim K$. Thus, $\mathcal{C}$ becomes a clique of size $n$.  
\end{proof}

\begin{theorem}
Let $S$ be a semigroup with $n(>1)$ minimal left ideals. Then $\omega(\Gamma(S)) = n$ if and only if one of the following holds:
\begin{enumerate}
		 \item[{\rm(i)}] $S = I_{123}$.
		 \item[{\rm(ii)}] $S$ has only two minimal left ideals $I_1$ and $I_2$ and a unique maximal left ideal $I_{12}$.
		 \end{enumerate}
\end{theorem}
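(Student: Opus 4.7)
The plan is to study the clique $\mathcal{F} := \{J \in V(\Gamma(S)) : I_1 \subseteq J\}$ of all nontrivial left ideals containing a fixed minimal ideal $I_1$; since any two of its members share $I_1$, we have $\omega(\Gamma(S)) \geq |\mathcal{F}|$, and tightness of this lower bound will force the structure of $S$ to be very restricted.

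For the forward implication, an arbitrary union of left ideals is a left ideal, so each subset $T \subseteq \{2, \ldots, n\}$ contributes the left ideal $I_1 \cup \bigcup_{t \in T} I_t$ to $\mathcal{F}$ unless that union equals $S$; the latter occurs for at most one $T$, so $|\mathcal{F}| \geq 2^{n-1} - 1$. This exceeds $n$ when $n \geq 4$, hence $\omega(\Gamma(S)) = n$ forces $n \in \{2, 3\}$. For $n = 3$, if $S \neq I_{123}$ then $I_{123}$ itself enlarges $\mathcal{F}$ to a clique of size $4$, contradicting $\omega = 3$; hence $S = I_{123}$, yielding (i). For $n = 2$, Theorem \ref{two minimalintersection} rules out $S = I_{12}$ (otherwise $\Gamma(S)$ is null with $\omega = 1$), so $I_{12}$ is a nontrivial left ideal; I then rule out any further nontrivial left ideal $J$ by case analysis: if $J \supsetneq I_{12}$, or if $I_1 \subsetneq J \subsetneq I_{12}$ with $I_2 \not\subseteq J$, or symmetrically for $I_2$, then three vertices sharing a common minimal form a triangle, contradicting $\omega = 2$. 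Hence the only nontrivial left ideals are $I_1, I_2, I_{12}$, whence $I_{12}$ is the unique maximal and (ii) holds.

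For the backward implication, in case (i) the vertex set is $\{I_1, I_2, I_3, I_{12}, I_{13}, I_{23}\}$: the triangle $I_{12} \sim I_{13} \sim I_{23}$ gives $\omega \geq 3$, while pairwise disjointness of the minimals (Remark \ref{disjoint intersection minimal}) forces every clique to contain at most one $I_i$, and direct inspection rules out any $4$-clique, giving $\omega = 3 = n$. In case (ii), since $I_{12}$ is the unique maximal every nontrivial left ideal lies inside $I_{12}$, and the triangle constructions from the forward direction show that any intermediate left ideal (strictly between a minimal and $I_{12}$, or strictly above $I_{12}$) would immediately produce a $3$-clique inside the cone over $I_1$ or $I_2$; under $\omega = 2 = n$ only $I_1, I_2, I_{12}$ survive, so $\Gamma(S)$ is the path $I_1 - I_{12} - I_2$ and $\omega = 2 = n$.

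The main obstacle is the forward $n = 2$ case: the clique bound $|\mathcal{F}| \geq 2^{n-1} - 1 = 1$ is useless, and the rigid three-vertex structure must instead be extracted by a careful enumeration of the ways a stray left ideal can sit relative to $I_1, I_2, I_{12}$, each yielding a triangle and a contradiction. The remaining cases reduce to a clean combination of union-closure for left ideals and direct clique counting.
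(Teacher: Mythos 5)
Your overall strategy matches the paper's: rule out $n\geq 4$ by exhibiting a clique larger than $n$, force $S=I_{123}$ when $n=3$ by adjoining $I_{123}$ to an existing triangle, and settle $n=2$ by showing any vertex outside $\{I_1,I_2,I_{12}\}$ creates a triangle. Your clique for $n\geq 4$ — the ``cone'' $\mathcal{F}$ of all nontrivial left ideals containing $I_1$, of size at least $2^{n-1}-1>n$ — is actually a cleaner witness than the paper's set $\mathcal{C}$ of all $(n-1)$-fold and $2$-fold unions, which as literally written contains non-adjacent pairs such as $I_{12}$ and $I_{34}$; your version avoids that defect while serving the same purpose.

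The one genuine flaw is in your backward direction for (ii). You write that intermediate left ideals ``would immediately produce a $3$-clique'' and that ``under $\omega=2=n$ only $I_1,I_2,I_{12}$ survive'' — but in this direction you are assuming (ii) and trying to \emph{prove} $\omega(\Gamma(S))=2$, so you cannot invoke $\omega=2$ to eliminate extra vertices; as written the argument is circular. The repair is short and uses only hypothesis (ii): every nontrivial left ideal $K$ contains a minimal left ideal, say $I_1\subseteq K$; if $K\cap I_2=\emptyset$ then $K\subseteq S\setminus I_2$, and minimality of $I_2$ applied to the left ideal $K\cap I_2$ otherwise gives $I_2\subseteq K$, whence $I_{12}\subseteq K$ and maximality of $I_{12}$ forces $K=I_{12}$; in the first case one checks $K=I_1$ similarly (any element of $K\setminus I_1$ would generate a left ideal meeting neither... rather, $K\subseteq I_{12}$ by uniqueness of the maximal ideal, so $K=(K\cap I_1)\cup(K\cap I_2)=I_1$). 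Hence $V(\Gamma(S))=\{I_1,I_2,I_{12}\}$ and, by Remark~\ref{disjoint intersection minimal}, $\Gamma(S)$ is the path $I_1\sim I_{12}\sim I_2$ with $\omega=2$. Note the paper itself dismisses the converse as trivial, so supplying this argument is a point in your favour once the circularity is removed.
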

\begin{proof}
First suppose that $\omega(\Gamma(S)) = n$. Assume that $S$ has $n (\geq 4)$ minimal left ideals, namely $I_1, I_2, \ldots, I_n$. Then $\mathcal{C} = \{I_{{i_1}{i_2}\cdots{i_{n-1}}}, I_{{i_1}{i_2}} : i_1, i_2, \ldots, i_n \in [n] \}$ forms a clique of size greater than $n$ of $\Gamma(S)$. It follows that $\omega(\Gamma(S)) > n$. If $n =3$, assume  that $S \neq I_{123}$. Then $\mathcal{C} = \{I_{12}, I_{13}, I_{23}, I_{123}\}$ forms a clique of size four of $\Gamma(S)$. It follows that $S = I_{123}$.  
For $n =2$, we have either $S = I_{12}$ or $S \neq I_{12}$. For $S = I_{12}$, by Corollary \ref{null graphintersection} and by Theorem \ref{two minimalintersection}, $\Gamma(S)$ is disconnected. Thus, $\omega(\Gamma(S)) < n$. Thus $S \neq I_{12}$. If $S$ has a nontrivial left ideal $K \notin \{I_1, I_2, I_{12}\}$ then we get a clique of size three. Therefore, $I_{12}$ is a unique maximal left ideal. Converse follows trivially.
\end{proof}



\begin{lemma}\label{maximal ideal intersection nonempty}
 If $\Gamma(S)$ is connected then ${\rm Max}(S)$ forms a clique of $\Gamma(S)$.
\end{lemma}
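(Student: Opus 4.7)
The plan is to show that any two distinct maximal left ideals $M_1, M_2$ must be adjacent in $\Gamma(S)$, by assuming the opposite and deriving a contradiction from connectedness. The argument will hinge on the fact, which I would establish first, that $M_1 \cup M_2 = S$. This follows because $M_1 \cup M_2$ is a left ideal of $S$ properly containing $M_1$ (since $M_2 \not\subseteq M_1$ by maximality and distinctness), so maximality of $M_1$ forces the union to equal all of $S$.

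Next, I would assume for contradiction that $M_1 \cap M_2$ is trivial, so that $M_1 \nsim M_2$ in $\Gamma(S)$. Connectedness of $\Gamma(S)$ together with the diameter bound from Theorem \ref{diameter2} then produces a nontrivial left ideal $K$, distinct from both $M_1$ and $M_2$, with $M_1 \sim K \sim M_2$. Since $K \cap M_1$ is nontrivial while $M_1 \cap M_2$ is trivial, $K$ cannot lie inside $M_2$, so maximality of $M_2$ gives $K \cup M_2 = S$ as well.

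The final step is a short element chase that I would carry out to show $M_1 \subseteq K$: any $x \in M_1$ lies in $S = K \cup M_2$, and if $x \notin K$ then $x \in M_1 \cap M_2$, which is impossible. Since $K$ is a nontrivial (hence proper) left ideal, maximality of $M_1$ then forces $M_1 = K$, contradicting the fact that $K$ is a vertex distinct from $M_1$ on the path. The one subtlety I anticipate is merely keeping track of the convention on the word \emph{trivial} (empty versus $\{0\}$) in the intersection, but in both readings the chase goes through verbatim, so no separate case analysis should be needed. Applying this to every pair in ${\rm Max}(S)$ yields the claimed clique.
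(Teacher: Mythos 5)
Your proof is correct, but it takes a genuinely different route from the paper's. The paper assumes $J \nsim K$ for two maximal left ideals, notes $J \cup K = S$, and then invokes Lemma \ref{S minus K is lclass}: since $S\setminus J$ and $S\setminus K$ are both $\mathcal{L}$-classes, $S$ is partitioned into exactly two $\mathcal{L}$-classes, so (every left ideal being a union of $\mathcal{L}$-classes) $J$ and $K$ are the \emph{only} nontrivial left ideals, making $\Gamma(S)$ a two-vertex null graph and contradicting connectedness. Your argument avoids the $\mathcal{L}$-class machinery entirely: you take a common neighbour $K$ of $M_1$ and $M_2$, show $K\cup M_2=S$, and run an element chase to force $M_1\subseteq K$, hence $M_1=K$ by maximality, a contradiction. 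In fact your chase never uses $K\sim M_2$, only $K\sim M_1$ and $K\neq M_1$, so you do not need the diameter bound of Theorem \ref{diameter2} at all --- connectedness already supplies a neighbour of $M_1$, and your argument shows $M_1$ can have none, which is a slightly cleaner contradiction. What each approach buys: yours is more elementary and self-contained, relying only on the definition of maximality; the paper's is shorter on the page and yields extra structural information in the bad case (namely that $S$ is the disjoint union of two minimal-and-maximal left ideals, the configuration of Theorem \ref{two minimalintersection}). Your handling of the ``trivial means empty versus $\{0\}$'' ambiguity is also sound --- the chase works under either reading.
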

\begin{proof}
We prove the result by showing that if $J, K \in {\rm Max}(S)$ then $J \sim K$.  
Let if possible, $J \nsim K$. The maximality of $J$ and $K$ follows that $J \cup K = S$. By Lemma \ref{S minus K is lclass}, $S \setminus J$ and $S \setminus K$ are $\mathcal{L}-$classes of $S$. It follows that $J$ and $K$ are  only nontrivial left ideals of $S$. Thus, being a null graph $\Gamma(S)$ is disconnected, a contradiction.
\end{proof}

\begin{theorem}
If $K$ is a maximal left ideal of $S$ such that $deg(K)$ is finite, then $\chi(\Gamma(S)) < \infty$.
\end{theorem}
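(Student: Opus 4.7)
The plan is to show that the hypothesis actually forces $V(\Gamma(S))$ itself to be finite, after which $\chi(\Gamma(S)) \leq |V(\Gamma(S))| < \infty$ comes for free. By Lemma \ref{S minus K is lclass}, since $K$ is maximal, $S \setminus K$ is a single $\mathcal{L}$-class, which I will denote by $L$.

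I partition the nontrivial left ideals $I \neq K$ according to whether $I \cap K$ is trivial. Those with $I \cap K$ nontrivial are precisely the neighbors of $K$ in $\Gamma(S)$, and there are at most $\deg(K) < \infty$ of them. The crux is to prove that there is at most one nontrivial left ideal $I \neq K$ with $I \cap K$ trivial, namely $I = L$ (and this occurs only when $L$ happens to be a left ideal at all).

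To see this, suppose $I$ is such a left ideal. Then $I \not\subseteq K$, so pick $x \in I \setminus K$; by Lemma \ref{S minus K is lclass} the $\mathcal{L}$-class $L_x$ equals $L$. Since $I$ is a left ideal containing $x$, we have $S^1 x \subseteq I$. For any $y \in L$, the definition of $\mathcal{L}$-equivalence gives $y \in S^1 y = S^1 x \subseteq I$, so $L \subseteq I$. Conversely, the triviality of $I \cap K$ forces $I \subseteq S \setminus K = L$, and hence $I = L$.

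Combining the two cases, $V(\Gamma(S)) \subseteq \{K\} \cup N(K) \cup \{L\}$ has cardinality at most $\deg(K) + 2$, so $\Gamma(S)$ is a finite graph and $\chi(\Gamma(S)) < \infty$. The only delicate point is the inclusion $L \subseteq I$, which uses both that $I$ is a left ideal and the equality $S^1 x = S^1 y$ coming from $\mathcal{L}$-equivalence; everything else is a straightforward case split on whether a nontrivial left ideal meets $K$ nontrivially.
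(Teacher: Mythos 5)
Your proof is correct, and it takes a genuinely different --- and in fact stronger --- route than the paper's. The paper shows only that every non-neighbour $J$ of $K$ must be a \emph{minimal} left ideal (if some nontrivial left ideal $J' \subset J$ existed, then $J' \not\subseteq K$ would give $J' \cup K = S$ by maximality, so any element of $J \setminus J'$ would lie in $J \cap K$, a contradiction); since distinct minimal left ideals are disjoint (Remark \ref{disjoint intersection minimal}), the non-neighbours of $K$ form an independent set and can all share one colour, whence $\chi(\Gamma(S)) \leq \deg(K) + 2$. You instead prove the sharper statement that $K$ has at most \emph{one} non-neighbour, namely $L = S \setminus K$ when it happens to be a left ideal: your key observation that a left ideal containing one element of an $\mathcal{L}$-class must contain the whole class (via $y \in S^1y = S^1x \subseteq I$, using Lemma \ref{S minus K is lclass}) is correct, and combined with $I \subseteq S \setminus K$ it pins $I$ down completely. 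This yields the stronger conclusion $|V(\Gamma(S))| \leq \deg(K) + 2$, so the entire graph is finite and the finiteness of $\chi$ (and of $\omega$, $\alpha$, etc.) is immediate. The two descriptions are consistent --- your single candidate $S \setminus K$ is itself a minimal left ideal --- and the only caveat is that your step ``$I \cap K$ trivial implies $I \subseteq S \setminus K$'' reads trivial as empty; in a semigroup with zero, where trivial means $\{0\}$, one only gets $I \subseteq (S \setminus K) \cup \{0\}$, which still leaves at most two candidates and does not affect the conclusion.
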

\begin{proof}
 Let $J$ be an arbitrary nontrivial left ideal of $S$ such that $J \nsim K$. Note that $J$ is minimal left ideal of $S$. On contrary, suppose that $J$ is not a minimal left ideal of $S$. Then there exists a nontrivial left ideal $J'$ of $S$ such that $J' \subset J$. Since $K$ is maximal left ideal of $S$. Consequently, $J' \cup K = S$. It follows that  intersection of $J$ and $K$ is nontrivial, a contradiction. By Remark \ref{disjoint intersection minimal}, we can color all the vertices which are not adjacent with $K$ with one color. Since $deg(K)$ is finite, we have $\chi(\Gamma(S)) < \infty$.
\end{proof}

\begin{lemma}\label{chromaticnumberintersection}
        For $S = I_{{i_1}{i_2}\cdots{i_n}}$, we have $\omega(\Gamma(S)) = \chi(\Gamma(S)) = 2^{n-1}-1$.
\end{lemma}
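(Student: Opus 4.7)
The plan is to recognize $\Gamma(S)$ as a well-known combinatorial graph. Under the hypothesis $S=I_{i_1 i_2\cdots i_n}$, Remark \ref{everynontrivial left ideal is union} says every nontrivial left ideal has the form $I_A := \bigcup_{i\in A} I_i$ for some non-empty $A\subsetneq [n]$, while Remark \ref{disjoint intersection minimal} says the minimal left ideals are pairwise disjoint. Consequently $I_A\cap I_B$ is nontrivial if and only if $A\cap B\neq\emptyset$, so $\Gamma(S)$ is isomorphic to the graph on the $2^n-2$ proper non-empty subsets of $[n]$, with edges given by non-empty intersection. Everything will be proved in this combinatorial model.

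For the lower bound on the clique number, I would exhibit the star family $\mathcal{C}=\{A\subseteq[n]:1\in A,\ A\neq[n]\}$. Every two members of $\mathcal{C}$ share the element $1$, so $\mathcal{C}$ is a clique, and $|\mathcal{C}|=2^{n-1}-1$. Thus $\omega(\Gamma(S))\geq 2^{n-1}-1$.

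For the upper bound on the chromatic number, I would build an explicit proper coloring by complementary pairs. Partition the $2^n-2$ proper non-empty subsets of $[n]$ into the $2^{n-1}-1$ pairs $\{A,[n]\setminus A\}$, and assign one of $2^{n-1}-1$ colors to each pair. If two subsets receive the same color they are either equal or complementary, hence disjoint, hence non-adjacent in $\Gamma(S)$; so the coloring is proper and $\chi(\Gamma(S))\leq 2^{n-1}-1$.

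Combining these with the universal bound $\omega(\Gamma)\leq\chi(\Gamma)$ yields $2^{n-1}-1\leq\omega(\Gamma(S))\leq\chi(\Gamma(S))\leq 2^{n-1}-1$, and equality throughout gives $\omega(\Gamma(S))=\chi(\Gamma(S))=2^{n-1}-1$. There is no real obstacle here once the set-theoretic reformulation is in place; the only point requiring a little care is verifying that the complement $[n]\setminus A$ of a proper non-empty $A$ is itself a proper non-empty subset, which holds for every such $A$, so the complementary-pair partition is legitimate.
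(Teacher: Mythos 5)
Your proof is correct. The overall strategy coincides with the paper's --- sandwich $\omega(\Gamma(S))\leq\chi(\Gamma(S))$ between an explicit clique and an explicit proper coloring, both of size $2^{n-1}-1$, with the coloring given in both cases by identifying each proper non-empty $A\subseteq[n]$ with its complement $[n]\setminus A$. The genuine difference is in the clique. The paper exhibits the ``majority'' family: all subsets of size at least $\lceil n/2\rceil$ when $n$ is odd, and, when $n$ is even, all subsets of size at least $n/2+1$ together with one representative from each complementary pair of subsets of size exactly $n/2$; verifying that this has cardinality $2^{n-1}-1$ requires a case split on the parity of $n$ and some care at the middle layer. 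Your star family $\{A:1\in A,\ A\neq[n]\}$ achieves the same cardinality with no case analysis, since any two members meet in the element $1$ and there are exactly $2^{n-1}-1$ such sets. Your version is shorter and cleaner; the paper's middle-layer construction is reused later (in its strong metric dimension computation it cites this lemma to identify a maximum clique of the reduced graph), but your clique would serve that purpose equally well. One small point you handled correctly and should keep explicit: the reduction to the set-system model uses both Remark \ref{everynontrivial left ideal is union} (every nontrivial left ideal is a union of the minimal ones) and Remark \ref{disjoint intersection minimal} (the minimal left ideals are pairwise disjoint), so that $I_A\cap I_B$ is nontrivial exactly when $A\cap B\neq\emptyset$ and distinct subsets give distinct ideals.
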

\begin{proof}
       First note that $S$ has $2^n-2$ nontrivial left ideals and every nontrivial left ideal of $S$ is of the form $I_{{i_1}{i_2}\cdots{i_k}}$ and $1 \leq k \leq n-1$ (cf. Remark \ref{everynontrivial left ideal is union}). If $n$ is odd then consider $\mathcal{C} = \{I_{{j_1}{j_2}\cdots{j_t}} : \lceil \frac{n}{2} \rceil \leq t\leq n-1\}$. Note that $\mathcal{C}$ forms a clique of size $2^{n-1}-1$. We may now suppose that $n$ is even. Consider $\mathcal{C}_1 = \{I_{{j_1}{j_2}\cdots{j_t}} :   
 \frac{n}{2} + 1 \leq t\leq n-1\}$. Notice that $\mathcal{C}_1$ forms a clique. Further, observe that $\mathcal{C}^{'} = \{I_{{i_1}{i_2}\cdots{i_{\frac{n}{2}}}} :  i_1, i_2, \ldots, i_{\frac{n}{2}} \in [n]\}$ do not form a clique because for $j_1, j_2, \ldots, j_{\frac{n}{2}} \in [n] \setminus \{i_1, i_2, \ldots, i_{\frac{n}{2}}\}$, $I_{{i_1}{i_2}\cdots{i_{\frac{n}{2}}}} \nsim I_{{j_1}{j_2}\cdots{j_{\frac{n}{2}}}}$. However, $\mathcal{C}^{''} = \{I_{{i_1}{i_2}\cdots{i_{\frac{n}{2}}}} \in \mathcal{C}^{'} \setminus \{I_{{j_1}{j_2}\cdots{j_{\frac{n}{2}}}}\} : j_1, j_2, \ldots, j_{\frac{n}{2}} \notin \{i_1, i_2, \ldots, i_{\frac{n}{2}}\} \}$ forms a clique of size $\frac{|\mathcal{C}^{'}|}{2}$. Further note that the set $\mathcal{C}_1 \cup \mathcal{C}^{''}$ also forms a clique of size $2^{n-1}-1$. Thus, $\omega(\Gamma(S)) \geq 2^{n-1}-1$. To complete the proof, we show that  $\chi(\Gamma(S)) \leq 2^{n-1}-1$. For $I = I_{{i_1}{i_2}\cdots{i_k}}$ and $J = I_{{j_1}{j_2}\cdots{j_{n-k}}}$, where $i_1, i_2, \ldots, i_k \in [n] \setminus \{j_1, j_2, \ldots, j_{n-k}\}$ we have $I \cap J$ is trivial. Consequently, we can color these vertices with same color so that we can cover all the vertices with $2^{n-1}-1$ colors. Thus $\chi(\Gamma(S)) \leq 2^{n-1}-1$. Hence $\omega(\Gamma(S)) = \chi(\Gamma(S)) = 2^{n-1}-1$.
\end{proof}
\begin{corollary}
	If $S = I_{{i_1}{i_2}\cdots{i_n}}$  then $\Gamma(S)$ is a weakly perfect graph.
	\end{corollary}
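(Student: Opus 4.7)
The plan is to observe that this corollary is an immediate consequence of Lemma \ref{chromaticnumberintersection}. By definition, a graph $\Gamma$ is \emph{weakly perfect} precisely when $\omega(\Gamma)=\chi(\Gamma)$, and the preceding lemma already establishes that both of these invariants equal $2^{n-1}-1$ for $\Gamma(S)$ when $S=I_{{i_1}{i_2}\cdots{i_n}}$. So I would write essentially a one-line proof invoking Lemma \ref{chromaticnumberintersection}.

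If I had to reprove the ingredients from scratch rather than citing the lemma, the first step would be to enumerate the nontrivial left ideals of $S$: by Remark \ref{everynontrivial left ideal is union} they are precisely the $2^n-2$ nonempty proper unions $I_{{i_1}\cdots{i_k}}$ of the minimal left ideals $I_1,\ldots,I_n$, and two such unions are adjacent in $\Gamma(S)$ if and only if the underlying index sets intersect. So $\Gamma(S)$ is isomorphic to the Kneser-type graph on nonempty proper subsets of $[n]$ with edges given by nondisjointness.

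Next, for the lower bound on $\omega$, I would exhibit an explicit clique of size $2^{n-1}-1$ by fixing a particular element, say $1\in[n]$, and taking all proper nonempty subsets of $[n]$ that contain $1$; any two of these meet at $1$. For the matching upper bound on $\chi$, I would use the coloring that assigns a subset $A\subsetneq[n]$ and its complement $[n]\setminus A$ the same color, since non-adjacency in $\Gamma(S)$ corresponds to disjointness, which (among proper nonempty subsets) forces the two subsets to be complementary; this uses exactly $\tfrac{2^n-2}{2}=2^{n-1}-1$ colors. Combining these with the trivial inequality $\omega(\Gamma(S))\le\chi(\Gamma(S))$ gives the desired equality.

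The main (and only) obstacle has already been surmounted in Lemma \ref{chromaticnumberintersection}; the only care needed is the parity split in the lemma's construction of a $2^{n-1}-1$-clique when $n$ is even, but this is not revisited in proving the corollary. Thus the corollary itself is just a renaming.
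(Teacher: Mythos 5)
Your proposal is correct and coincides with the paper's treatment: the corollary is stated without a separate proof precisely because it is the one-line observation that Lemma \ref{chromaticnumberintersection} gives $\omega(\Gamma(S))=\chi(\Gamma(S))=2^{n-1}-1$, which is the definition of weakly perfect. (One small caveat in your optional re-derivation: disjointness of two nonempty proper subsets does \emph{not} force them to be complementary, but your complement-pair coloring is still proper since each color class $\{A,[n]\setminus A\}$ is independent, so nothing breaks.)
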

In order to find the upper bound of the chromatic number of $\Gamma(S)$, where $S$ is an arbitrary semigroup, first we define 
\begin{align*} 
    X_1 & = \{I \in V(\Gamma(S)) :  I_{{i_1}{i_2}\cdots{i_n}} \subseteq I \}\\
        X_2 & = \{I \in V(\Gamma(S)) :  I \subset I_{{i_1}{i_2}\cdots{i_n}} ~\text{and}~ I \neq I_{{i_1}{i_2}\cdots{i_n}} \}\\
X_3 & = V(\Gamma(S)) \setminus (X_1 \cup X_2). 
\end{align*}

Let $\widetilde{{\text{Min}(I)}}$ be the set of all minimal left ideals contained in $I$
. Further define a relation $\rho$ on $X_3$ such that \begin{center}
    $J ~~ \rho ~~K \Longleftrightarrow \widetilde{{\text{Min}(J)}} = \widetilde{{\text{Min}(K)}}$ 
\end{center}
Note that $\rho$ is an equivalence relation.
\begin{theorem}
    Let $S$ be a semigroup with $n$ minimal left ideals and $\chi(\Gamma(S)) < \infty$. Then
    \begin{center}$\chi(\Gamma(S)) \leq |X_1| + (2^{n-1}-1) + (2^{n-1}-1)m$,
    \end{center}
    where $m = {\rm{max}} \{|C(I)| : C(I) \
    \text{is an equivalence class of} \; \rho \}$.
\end{theorem}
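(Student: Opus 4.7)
The plan is to partition $V(\Gamma(S)) = X_1 \sqcup X_2 \sqcup X_3$ and colour each block using its own disjoint palette, so that the bound follows by summing the three contributions. Set $J_0 := I_{i_1 i_2 \cdots i_n}$. Every $I \in X_1$ contains the non-trivial left ideal $J_0$, so any two elements of $X_1$ intersect non-trivially; thus $X_1$ induces a clique and requires exactly $|X_1|$ colours. For $X_2$, I would first argue that each $I \in X_2$ is of the form $\bigcup_{k \in A} I_k$ for some $\varnothing \neq A \subsetneq [n]$: any non-zero $x \in I$ lies in some minimal $I_k$, and minimality forces $I_k = S^1 x \subseteq I$, so $I$ is a union of minimal left ideals, proper since $I \subsetneq J_0$. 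Hence the subgraph induced by $X_2$ is isomorphic to the graph treated in Lemma \ref{chromaticnumberintersection}, so it is properly colourable with $2^{n-1}-1$ fresh colours.

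The analysis of $X_3$ hinges on the following structural claim: if $I, J \in X_3$ satisfy $\widetilde{{\text{Min}(I)}} \cap \widetilde{{\text{Min}(J)}} = \varnothing$, then $I \cap J$ is trivial (so $I \nsim J$). Indeed $I \cap J$ is a left ideal, and if it were non-trivial it would contain some minimal $I_k$, forcing $k$ into both labels, a contradiction. One also checks that $\widetilde{{\text{Min}(I)}}$ is a non-empty proper subset of $[n]$ for every $I \in X_3$ (non-empty because every non-trivial left ideal contains a minimal one; proper because $\widetilde{{\text{Min}(I)}} = [n]$ would give $J_0 \subseteq I$, contradicting $I \notin X_1$), and that each $\rho$-class is a clique of size at most $m$, its members sharing at least one common minimal left ideal.

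I would then pair each non-empty proper $A \subsetneq [n]$ with its complement $A^c$, obtaining exactly $(2^n-2)/2 = 2^{n-1}-1$ disjoint pairs. Within each pair the two $\rho$-classes have no edges between them (by the structural claim), so their union, a disjoint union of two cliques of size at most $m$, is properly colourable with a single palette of $m$ colours. Allotting a fresh $m$-colour palette to each of the $2^{n-1}-1$ pairs colours $X_3$ using $(2^{n-1}-1)m$ colours in total. Since every vertex of $X_1$ is adjacent to every vertex of $X_2 \cup X_3$ (each of the latter meets $J_0$ non-trivially), the three palettes must be disjoint, and summing yields the stated bound. The main obstacle is the structural claim for $X_3$: showing that $\rho$-classes indexed by disjoint labels are genuinely non-adjacent. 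This step uses the hypothesis that $S$ has minimal left ideals in an essential way, and it is exactly what collapses the naive $(2^n-2)m$ bound down to $(2^{n-1}-1)m$ through complement-pairing.
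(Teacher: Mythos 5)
Your proposal is correct and follows essentially the same route as the paper: the same partition $V(\Gamma(S))=X_1\sqcup X_2\sqcup X_3$, the reduction of adjacency in $X_3$ to the label sets $\widetilde{{\text{Min}(\cdot)}}$ (each $\rho$-class being a clique of size at most $m$), and the complement-pairing colouring with $2^{n-1}-1$ palettes. You merely spell out explicitly the pairing argument that gives the factor $(2^{n-1}-1)m$ for $X_3$, which the paper leaves implicit.
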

\begin{proof}
        
       Note that for any $I, J \in X_1$, we have $I \sim J$. Since every nontrivial left ideal contains at least one minimal left ideal, consequently each element of $X_1$ is a dominating vertex of $\Gamma(S)$. Therefore, we need at least $|X_1|$ colors in any coloring of $\Gamma(S)$. 
       By proof of Lemma \ref{chromaticnumberintersection}, we can color all the vertices of $X_2$ with at least $2^{n-1}-1$ colors so that we need at least $2^{n-1}-1 + |X_1|$ colors to color $X_1 \cup X_2$. 
       
     To prove our result we need to show that the vertices of $X_3$ can be colored by using $(2^{n-1}-1)m$ colors. Now let $J, K \in X_3$ such that $I_{{i_1}{i_2}\cdots{i_k}} \subset J$ and $I_{{j_1}{j_2}\cdots{j_t}} \subset K$. Note that $J \cap K$ is nontrivial if and only if $I_{{i_1}{i_2}\cdots{i_k}} \cap I_{{j_1}{j_2}\cdots{j_t}}$ is nontrivial. It follows that $J \sim K$ in $\Gamma(S)$ if and only if either $I_{{i_1}{i_2}\cdots{i_k}} = I_{{j_1}{j_2}\cdots{j_t}}$ or $I_{{i_1}{i_2}\cdots{i_k}} \sim I_{{j_1}{j_2}\cdots{j_t}}$.
       
      Note that the equivalence class of $I \in X_3$ under $\rho$ is $C(I) = \{J \in X_3 : \widetilde{{\text{Min}(I)}} = \widetilde{{\text{Min}(J)}} \}$. Since $\chi(\Gamma(S)) < \infty$ we get $|C(I)| < \infty$. Consequently, $|C(I)| \leq m$. Observe that $C(I)$ forms a clique, we require maximum $m$ colors to color each class under $\rho$. Note that $J \in C(J)$ and $K \in C(K)$ such that $J \sim K$ if and only if $I_{{i_1}{i_2}\cdots{i_k}} \sim I_{{j_1}{j_2}\cdots{j_t}}$ in $\Gamma(S)$. Consequently, we can color the vertices in $X_3$ by using $(2^{n-1}-1)m$ colors. 
       \end{proof}
       
       \begin{theorem}
    Let $S$ be a semigroup with $n$ minimal left ideals. Then 
 \[\operatorname{sdim}(\Gamma(S)) =  \begin{cases}
|X_1| + |X_3| + 2^{n-1}-2 &  \text{\rm if} ~ S \neq I_{{i_1}{i_2}\cdots{i_n}} \\
2^{n-1}-1 &  \text{\rm if}~ S = I_{{i_1}{i_2}\cdots{i_n}}
\end{cases}\]
\end{theorem}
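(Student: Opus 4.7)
The plan is to invoke Theorem \ref{strong-metric-dim}, which applies because $\operatorname{diam}(\Gamma(S)) \leq 2$ by Theorem \ref{diameter2}. Thus $\operatorname{sdim}(\Gamma(S)) = |V(\Gamma(S))| - \omega(\widehat{\Gamma(S)})$, and the task reduces to identifying the $\equiv$-classes and computing the clique number of $\widehat{\Gamma(S)}$. For each vertex $I$, let $M(I) \subseteq [n]$ denote the set of indices $k$ with $I_k \subseteq I$; since every nontrivial left ideal of $S$ contains at least one minimal left ideal, $M(I)$ is nonempty.

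The crucial reduction is that adjacency in $\Gamma(S)$ depends only on $M$: namely, $I \sim J$ if and only if $M(I) \cap M(J) \neq \emptyset$. The reverse direction is immediate, since any $k \in M(I) \cap M(J)$ places $I_k$ inside $I \cap J$. For the forward direction, given $x \in I \cap J$, the principal left ideal $S^1 x$ lies in $I \cap J$ and contains some minimal left ideal $I_k$, so $k \in M(I) \cap M(J)$. Consequently $N[I] = N[J]$ if and only if $M(I) = M(J)$: indeed if $i \in M(I) \setminus M(J)$, then the minimal ideal $I_i$ is a vertex lying in $N[I] \setminus N[J]$.

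The $\equiv$-classes therefore correspond bijectively to the realized values of $M$. The class for $A = [n]$ is exactly $X_1$, which is nonempty iff $S \neq I_{12\cdots n}$; for each proper nonempty $A \subsetneq [n]$, the class $C_A = \{I : M(I) = A\}$ always contains $I_A$. Adjacency on $\widehat{\Gamma(S)}$ is the intersection relation on the corresponding index sets, so $\omega(\widehat{\Gamma(S)})$ equals the maximum size of a pairwise-intersecting family of admissible nonempty subsets of $[n]$. By the standard pairing $A \leftrightarrow [n] \setminus A$, together with the star family $\{A : 1 \in A\}$, one gets $\omega(\widehat{\Gamma(S)}) = 2^{n-1}$ when all nonempty subsets are admissible (the case $S \neq I_{12\cdots n}$), and $\omega(\widehat{\Gamma(S)}) = 2^{n-1} - 1$ when $A = [n]$ must be excluded (the case $S = I_{12\cdots n}$).

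To finish, I compute $|V(\Gamma(S))|$. When $S \neq I_{12\cdots n}$, any left ideal contained in $I_{12\cdots n}$ is a union of minimal left ideals (since by minimality $J \cap I_k \in \{\emptyset, I_k\}$), so $X_2$ is in bijection with proper nonempty subsets of $[n]$, giving $|X_2| = 2^n - 2$ and $|V(\Gamma(S))| = |X_1| + |X_3| + 2^n - 2$; subtracting $2^{n-1}$ yields the claimed $|X_1| + |X_3| + 2^{n-1} - 2$. When $S = I_{12\cdots n}$, the whole vertex set is $\{I_A : \emptyset \neq A \subsetneq [n]\}$, of cardinality $2^n - 2$, and the strong metric dimension becomes $(2^n - 2) - (2^{n-1} - 1) = 2^{n-1} - 1$. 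The main obstacle is the adjacency reduction in the second paragraph, which leverages the existence of minimal left ideals to recast adjacency in purely set-theoretic terms on $[n]$; once this is in hand, the rest is bookkeeping.
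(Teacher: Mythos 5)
Your proposal is correct and follows essentially the same route as the paper: apply Theorem \ref{strong-metric-dim}, identify the $\equiv$-classes with the sets of minimal left ideals contained in a vertex so that $\widehat{\Gamma(S)}$ becomes the intersection graph of admissible nonempty subsets of $[n]$, compute $\omega(\widehat{\Gamma(S)})$ as $2^{n-1}$ or $2^{n-1}-1$ according to whether $[n]$ is admissible, and finish by counting $|V(\Gamma(S))|$ via $|X_2|=2^n-2$. The only differences are cosmetic: you prove the adjacency reduction and the maximal intersecting-family bound directly, where the paper asserts the former and cites Lemma \ref{chromaticnumberintersection} for the latter.
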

\begin{proof}
        Let $I, J \in V(\Gamma(S))$ such that $I_{{i_1}{i_2}\cdots{i_k}} \subseteq I$ and $I_{{j_1}{j_2}\cdots{j_t}} \subseteq J$. Then $I \sim J$ if and only if either $I_{{i_1}{i_2}\cdots{i_k}} = I_{{j_1}{j_2}\cdots{j_t}}$ or $I_{{i_1}{i_2}\cdots{i_k}} \sim I_{{j_1}{j_2}\cdots{j_t}}$. Define a relation $\rho_1$ on $V(\Gamma(S))$ such that $I$  $\rho_1$  $J$ if and only if $\widetilde{{\text{Min}(I)}} = \widetilde{{\text{Min}(J)}}$. Clearly, $\rho_1$ is an equivalence relation on $V(\Gamma(S))$. Let $N[I_{{i_1}{i_2}\cdots{i_k}}] = \{I \in V(\Gamma(S)) : \widetilde{{\text{Min}(I)}} = I_{{i_1}{i_2}\cdots{i_k}}\}$  be equivalence class containing $I_{{i_1}{i_2}\cdots{i_k}}$. If $S \neq  I_{{i_1}{i_2}\cdots{i_n}}$, then by Theorem \ref{strong-metric-dim}, we have $\mathcal{R}_{\Gamma(S)}$ whose vertex set $V(\mathcal{R}_{\Gamma(S)}) = \{I_{{i_1}{i_2}\cdots{i_k}} : i_1, i_2, \cdots, i_k \in [n] ~  \text{and} ~ 1 \leq k \leq n\}$. By using Lemma \ref{chromaticnumberintersection}, note that $\omega (\mathcal{R}_{\Gamma(S)}) = 2^{n-1}$. Then $\operatorname{sdim}(\Gamma(S)) = |X_1| + |X_3| + 2^{n-1}-2$. Next, if $S = I_{{i_1}{i_2}\cdots{i_n}}$, then $V(\mathcal{R}_{\Gamma(S)}) = \{I_{{i_1}{i_2}\cdots{i_k}} : i_1, i_2, \cdots, i_k \in [n] ~  \text{and} ~ 1 \leq k \leq n-1\}$. By using Lemma \ref{chromaticnumberintersection}, note that $\omega (\mathcal{R}_{\Gamma(S)}) = 2^{n-1}-1$. Then $\operatorname{sdim}(\Gamma(S)) =  2^{n-1}-1$.
\end{proof}
Now in the remaining section, we consider a class of those semigroups which are union of $n$ minimal left ideals. In particular, completely simple semigroups belongs to this class. In what follows, the semigroup $S$ is assumed to be the union of $n$ minimal left ideals $I_{i_1}, I_{i_2}, \ldots, I_{i_n}$ i.e. $S = I_{{i_1}{i_2}\cdots{i_n}}$. The following lemma gives the lower bound of the metric dimension of $\Gamma(S)$. 
\begin{lemma}[{\cite[Theorem 1]{chartrand2000resolvability}}]\label{metric dimension theorem}
        For positive integers $d$ and $m$ with $d < m$, define $f(m, d)$ as the least positive integer $k$ such that $k + d^k \geq m$. Then for a connected graph $\Gamma$ of order $m \geq 2$ and diameter $d$, the metric dimension $\beta(\Gamma) \geq f(m, d)$.   
\end{lemma}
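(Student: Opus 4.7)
The plan is to prove the bound by a counting argument on distance vectors. Given a connected graph $\Gamma$ of order $m$ and diameter $d$, and a resolving set $W = \{w_1, w_2, \ldots, w_k\}$, every vertex $v \in V(\Gamma)$ gives rise to its representation vector
\[
r(v \mid W) = \bigl(d(v, w_1), d(v, w_2), \ldots, d(v, w_k)\bigr),
\]
and $W$ being resolving means the map $v \mapsto r(v \mid W)$ is injective on $V(\Gamma)$. The key numerical observation is that each coordinate $d(v, w_i)$ lies in $\{0, 1, \ldots, d\}$, but the coordinate value $0$ can only occur when $v = w_i$. So for a vertex $v \notin W$, every coordinate of $r(v \mid W)$ is forced to lie in $\{1, 2, \ldots, d\}$.

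First I would make the previous observation precise: the number of possible representation vectors for vertices outside $W$ is at most $d^k$, since each of the $k$ coordinates has at most $d$ possible values. Since there are exactly $m - k$ vertices outside $W$ and all their representation vectors must be pairwise distinct (by the injectivity of $r(\cdot \mid W)$), we obtain the inequality $m - k \leq d^k$, i.e.\ $k + d^k \geq m$.

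Next, taking $k = \beta(\Gamma)$ corresponds to a resolving set of minimum cardinality, so the above inequality yields $\beta(\Gamma) + d^{\beta(\Gamma)} \geq m$. By the very definition of $f(m, d)$ as the least positive integer $k$ satisfying $k + d^k \geq m$, this forces $\beta(\Gamma) \geq f(m, d)$, which is the claimed bound.

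The only subtle point — and I expect it to be the main bookkeeping obstacle — is handling the vertices of $W$ themselves: their representation vectors contain a zero coordinate at the position of that vertex in $W$, so they do not compete for the $d^k$ available vectors for the outside vertices. Once one isolates vertices inside $W$ from those outside and counts them separately, the estimate $m - k \leq d^k$ drops out cleanly. Since this is a standard cited result (Chartrand et al.), the full proof would simply reference \cite{chartrand2000resolvability}, but the sketch above captures its content entirely through the distance-vector pigeonhole argument.
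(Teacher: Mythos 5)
Your argument is correct and is exactly the standard pigeonhole proof of the cited result: the $m-k$ vertices outside a resolving set $W$ of size $k$ have distinct distance vectors with entries in $\{1,\ldots,d\}$, giving $m-k\leq d^k$ and hence $\beta(\Gamma)\geq f(m,d)$. The paper itself offers no proof of this lemma (it is quoted verbatim from Chartrand et al.), so there is nothing to compare beyond noting that your sketch faithfully reproduces the argument of the cited source.
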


\begin{theorem}
    If $S = I_{{i_1}{i_2}\cdots{i_n}}$ then  the metric dimension of $\Gamma(S)$ is given below:
\[\beta(\Gamma(S)) = \begin{cases}
2 &  \text{\rm if} ~ n = 3 \\
n &  \text{\rm if}~ n \geq 4
\end{cases}\]
\end{theorem}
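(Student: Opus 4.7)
The plan is to apply Lemma \ref{metric dimension theorem} for the lower bound in both cases and to exhibit explicit resolving sets for the upper bound. Throughout, by Remark \ref{everynontrivial left ideal is union} the vertices of $\Gamma(S)$ are indexed by nonempty proper subsets $T$ of $[n]$ (write $I_T$ for the vertex associated with $T$), adjacency $I_T \sim I_{T'}$ is equivalent to $T \cap T' \neq \emptyset$, we have $|V(\Gamma(S))| = 2^n - 2$, and Theorem \ref{diameter2clasification} tells us $\Gamma(S)$ has diameter $2$.

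For the lower bound I would invoke Lemma \ref{metric dimension theorem} with $m = 2^n - 2$ and $d = 2$. When $n = 3$, $f(6,2) = 2$ since $2 + 2^{2} = 6 \geq 6$, giving $\beta(\Gamma(S)) \geq 2$. When $n \geq 4$, I claim $f(2^n - 2, 2) = n$: the value $k = n - 1$ fails, since $(n-1) + 2^{n-1} < 2^n - 2$ is equivalent to $n + 1 < 2^{n-1}$ (true at $n = 4$ as $5 < 8$, and clearly preserved by induction on $n$), while $k = n$ succeeds trivially because $n + 2^n \geq 2^n - 2$. Hence $\beta(\Gamma(S)) \geq n$ in this case.

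For the upper bound when $n \geq 4$, I would take $R = \{I_1, I_2, \ldots, I_n\}$ and compute, for an arbitrary vertex $J = I_T$,
\[
d(J, I_k) = \begin{cases} 0 & \text{if } T = \{k\}, \\ 1 & \text{if } k \in T \text{ and } |T| \geq 2, \\ 2 & \text{if } k \notin T, \end{cases}
\]
where the last case is witnessed by the path $I_k \sim I_{\{k,j\}} \sim J$ for any $j \in T$ (noting $I_{\{k,j\}}$ is a proper subset of $S$ since $n \geq 3$). The resulting coordinate vector uniquely encodes the subset $T$, so $R$ is a resolving set and $\beta(\Gamma(S)) \leq n$.

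For $n = 3$, I would verify the smaller resolving set $R' = \{I_1, I_2\}$ directly: the six vertices $I_1, I_2, I_3, I_{12}, I_{13}, I_{23}$ yield the distance vectors $(0,2), (2,0), (2,2), (1,1), (1,2), (2,1)$ respectively, which are pairwise distinct, giving $\beta(\Gamma(S)) \leq 2$. Combined with the two lower bounds, this produces the stated values. I expect the only non-routine step to be the injectivity of the distance vector with respect to $\{I_1, \ldots, I_n\}$, which the explicit formula above settles; the arithmetic invoking Lemma \ref{metric dimension theorem} and the small enumeration for $n = 3$ are straightforward.
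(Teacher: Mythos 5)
Your proposal is correct and follows essentially the same route as the paper: the lower bound via Lemma \ref{metric dimension theorem} applied with $m = 2^n-2$ and $d = 2$, the upper bound by showing ${\rm Min}(S)$ is a resolving set, and a direct check of $\{I_1, I_2\}$ for $n = 3$. Your version is somewhat more explicit (the distance formula and the $n=3$ table are spelled out where the paper only sketches them), but the argument is the same.
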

\begin{proof} 
For $n =3$, it is easy to observe that $\{I_{i_1}, I_{i_2}\}$ forms a minimum resolving set. If $n \geq 4$ then by Remark \ref{everynontrivial left ideal is union}, we have $|V(\Gamma(S))| = 2^n-2$. In view of Lemma \ref{metric dimension theorem}, we get \begin{center}
 $n = f(2^n - 2, 2) \leq  \beta(\Gamma(S))$.  
\end{center}
It is easy to observe that for $k = n-1$, $2^k + k \not\geq 2^n - 2$. Therefore, the least positive integer $k$ is $n$ for which $k + 2^k \geq 2^n-2$. Thus $n \leq \beta(\Gamma(S))$.
To obtain upper bound of $\beta(\Gamma(S))$, let $J = I_{{i_1}{i_2}\cdots{i_k}}$ and $K = I_{{j_1}{j_2}\cdots{j_t}}$ be distinct arbitrary vertices $\Gamma(S)$. Since $J \neq K$, there exists at least $I_{i_s} \in {\rm Min}(S)$ such that $I_{i_s} \sim J$ and $I_{i_s} \nsim K$. It follows that $d(J, I_{i_s}) \neq d(K, I_{i_s})$. Thus ${\rm Min}(S) = \{I_{i_1} : i_1 \in [n]\}$ forms a resolving set for $\Gamma(S)$ of size $n$. It follows that $\beta(\Gamma(S)) \leq n$. This completes our proof.
\end{proof}

An automorphism of a graph $\Gamma$  is a permutation $f$ on $V (\Gamma)$ with the property that, for any vertices $u$ and $v$, we
	have $uf \sim vf$ if and only if $u \sim v$. The set $Aut(\Gamma)$ of all graph automorphisms of a graph $\Gamma$ forms a group with
	respect to composition of mappings. The symmetric group of degree $n$ is denoted by $S_n$. Now we obtain the automorphism group of $\Gamma(S)$, when $S$ is union of $n$ minimal left ideal. 
	\begin{lemma}\label{degree k}
	        Let $S = I_{{i_1}{i_2} \cdots {i_{n}}}$ and let $K = I_{{i_1}{i_2} \cdots {i_{k}}}$ be a nontrivial left ideal of $S$. Then $deg(K) = (2^k-2) + (2^{n-k}-2) + (2^{n-k}-1)(2^{k}-2)$.
	\end{lemma}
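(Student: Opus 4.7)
The plan is to count the neighbors of $K$ directly by encoding nontrivial left ideals as subsets of the index set $[n]$. By Remark \ref{everynontrivial left ideal is union}, every nontrivial left ideal of $S$ has the form $I_T := \bigcup_{j \in T} I_j$ for a unique nonempty proper subset $T \subsetneq [n]$, and by Remark \ref{disjoint intersection minimal} the minimal left ideals are pairwise disjoint, so $I_T \cap I_{T'}$ is nontrivial if and only if $T \cap T' \neq \emptyset$. Set $A = \{i_1, \ldots, i_k\}$ and $B = [n] \setminus A$, so that $K = I_A$ and the neighbors of $K$ are precisely the ideals $I_T$ with $T$ a nonempty proper subset of $[n]$ satisfying $T \neq A$ and $T \cap A \neq \emptyset$.

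I will split these neighbors into three disjoint families according to how $T$ relates to $A$, matching the three summands in the statement. First, the ideals strictly contained in $K$ correspond to the nonempty proper subsets $T \subsetneq A$, of which there are $2^{k}-2$. Second, the ideals strictly containing $K$ but distinct from $S$ correspond to sets $T$ with $A \subsetneq T \subsetneq [n]$; writing $T = A \cup T'$ with $T' \subseteq B$ and $T' \notin \{\emptyset, B\}$, these number $2^{n-k}-2$. Third, the ideals $J$ incomparable to $K$ but with nontrivial intersection correspond to sets $T$ for which $T_1 := T \cap A$ is a nonempty proper subset of $A$ (giving $2^{k}-2$ choices) and $T_2 := T \cap B$ is a nonempty subset of $B$ (giving $2^{n-k}-1$ choices); since $T_1$ and $T_2$ can be chosen independently, this family contributes $(2^{k}-2)(2^{n-k}-1)$ neighbors.

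Summing the three disjoint counts gives the desired formula $\deg(K) = (2^{k}-2) + (2^{n-k}-2) + (2^{n-k}-1)(2^{k}-2)$. The only subtlety in the argument is the careful handling of the boundary cases $T = \emptyset$, $T = A$, and $T = [n]$ so that each is excluded exactly once and the three families form a true partition of the neighborhood of $K$; beyond this bookkeeping no further difficulty arises.
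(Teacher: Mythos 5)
Your proof is correct and follows essentially the same three-case decomposition as the paper (ideals properly contained in $K$, ideals properly containing $K$, and incomparable ideals meeting $K$ nontrivially), with the same counts $2^k-2$, $2^{n-k}-2$, and $(2^{n-k}-1)(2^k-2)$. The explicit encoding of ideals as subsets of $[n]$ and the careful treatment of the boundary cases is a slightly more rigorous presentation of the same argument.
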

	\begin{proof}
	   Let $J$ be a nontrivial left ideal of $S$  such that $J \sim K$. Clearly $J \cap K$ is a nontrivial left ideal. Now we discuss the following cases:
		
		\noindent\textbf{Case 1.}  $J \not\subset K$ and $K \not\subset J$. Since $J \sim K$ and $K = I_{{i_1}{i_2} \cdots {i_{k}}}$ then note that the number of nontrivial left ideals such that $J \not\subset K$ and $K \not\subset J$ is 
\begin{align*}
 &= \left(\sum_{i=1}^{n-k} \binom{n-k}{i}\right) \left(\sum_{i=1}^{k-1} \binom{k}{i}\right) = (2^{n-k}-1)(2^k-2) 
 \end{align*}

	 
\noindent\textbf{Case 2.} $J \subset K$. The number of nontrivial left ideals of $S$ which are properly contained in $K$ are $2^k-2$.

\noindent\textbf{Case 3.} $K \subset J$. The number of nontrivial left ideals of $S$ properly containing $K$ are $2^{n-k}-2$.
Thus, from  the above cases we have the result.    
	\end{proof}
	
	\begin{corollary}
	If $S = I_{{i_1}{i_2}\cdots{i_n}}$  then the graph $\Gamma(S)$ is Eulerian for $n \geq 3$.
	\end{corollary}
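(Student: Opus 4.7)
The plan is to use the standard characterization (cited earlier in the Preliminaries) that a connected graph is Eulerian if and only if every vertex has even degree. So the proof reduces to two verifications: connectivity of $\Gamma(S)$ and evenness of every degree.

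First I would invoke Theorem \ref{two minimalintersection} to guarantee connectivity. Since $S = I_{{i_1}{i_2}\cdots{i_n}}$ with $n \geq 3$, $S$ is not the union of exactly two minimal left ideals, hence $\Gamma(S)$ is connected.

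Next, I would apply Lemma \ref{degree k} to compute the degree of an arbitrary vertex $K = I_{{i_1}{i_2}\cdots{i_k}}$ where $1 \leq k \leq n-1$ (by Remark \ref{everynontrivial left ideal is union} every vertex has this form). The lemma gives
\[
\deg(K) = (2^k - 2) + (2^{n-k} - 2) + (2^{n-k}-1)(2^k - 2).
\]
Expanding $(2^{n-k}-1)(2^k-2) = 2^n - 2^{n-k+1} - 2^k + 2$ and collecting terms, this simplifies to
\[
\deg(K) = 2^n - 2^{n-k} - 2.
\]
Since $n-k \geq 1$ and $n \geq 3$, both $2^n$ and $2^{n-k}$ are even, so $\deg(K)$ is even.

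The whole argument is essentially a bookkeeping step following Lemma \ref{degree k}, so there is no real obstacle; the only thing to be careful about is correctly collapsing the three-term degree expression and confirming that the edge case $k = n-1$ (where $2^{n-k} = 2$) still gives an even value, namely $2^n - 4$, which it does for all $n \geq 3$. Combining evenness of all degrees with connectivity yields the Eulerian conclusion via \cite[Theorem 1.2.26]{westgraph}.
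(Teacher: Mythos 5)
Your proposal is correct and follows exactly the route the paper intends: the corollary is stated immediately after Lemma \ref{degree k} with no separate proof, precisely because the degree formula simplifies to $2^n - 2^{n-k} - 2$, which is even for $1 \leq k \leq n-1$, and connectivity for $n \geq 3$ follows from Theorem \ref{two minimalintersection}. Your algebra and the check of the edge case $k = n-1$ are both right.
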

	
\begin{lemma}\label{symmetric group}
		For $\sigma \in S_n$, let $\phi_{\sigma} : V(\Gamma(S)) \rightarrow V(\Gamma(S))$ defined by $\phi_{\sigma}(I_{{i_1}{i_2}\cdots {i_k}}) = I_{\sigma({i_1})\sigma({i_2})\cdots \sigma({i_k})}$. Then $\phi_{\sigma} \in Aut(\Gamma(S))$.
	\end{lemma}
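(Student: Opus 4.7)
The plan is to exploit the bijective correspondence between $V(\Gamma(S))$ and the set of proper nonempty subsets of $[n]$ that is granted by Remark \ref{everynontrivial left ideal is union}. Since $S = I_{{i_1}{i_2}\cdots{i_n}}$ is the union of $n$ minimal left ideals and any two different minimal left ideals are disjoint (Remark \ref{disjoint intersection minimal}), each nontrivial left ideal is uniquely determined by the subset $A \subseteq [n]$ indexing the minimal left ideals whose union equals it. Thus I may write every vertex as $I_A$ for some $\emptyset \neq A \subsetneq [n]$, and conversely every such $A$ yields a vertex.

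First I would verify that $\phi_\sigma$ is well-defined and a bijection on $V(\Gamma(S))$. If $A$ is a proper nonempty subset of $[n]$, so is $\sigma(A)$, because $\sigma$ is a permutation of $[n]$. Hence $\phi_\sigma(I_A) = I_{\sigma(A)}$ lies in $V(\Gamma(S))$. A direct check shows $\phi_\sigma \circ \phi_\tau = \phi_{\sigma \tau}$ and $\phi_{\mathrm{id}}$ is the identity map, so $\phi_{\sigma^{-1}}$ is a two-sided inverse of $\phi_\sigma$, giving bijectivity.

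Next I would show adjacency preservation. By Remark \ref{disjoint intersection minimal}, for proper nonempty $A, B \subseteq [n]$ the intersection $I_A \cap I_B$ equals $\bigcup_{i \in A \cap B} I_i$, which is nontrivial if and only if $A \cap B \neq \emptyset$. Therefore
\[
I_A \sim I_B \iff A \cap B \neq \emptyset \iff \sigma(A) \cap \sigma(B) \neq \emptyset \iff I_{\sigma(A)} \sim I_{\sigma(B)},
\]
where the middle equivalence uses that $\sigma$ is a bijection on $[n]$. This is exactly the statement that $\phi_\sigma$ preserves adjacency in both directions, so $\phi_\sigma \in \operatorname{Aut}(\Gamma(S))$.

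There is no substantial obstacle here: the only subtle point is recognizing that adjacency in $\Gamma(S)$ is completely encoded by the combinatorics of subsets of $[n]$, which is immediate from the two remarks on minimal left ideals. Everything else is a routine bijection argument on $S_n$ acting on $2^{[n]} \setminus \{\emptyset, [n]\}$.
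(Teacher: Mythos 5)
Your proof is correct and follows essentially the same route as the paper: identify vertices with proper nonempty subsets of $[n]$, check bijectivity, and observe that adjacency corresponds to the index sets meeting, which is preserved by $\sigma$. You simply make explicit the two steps the paper leaves as assertions (that $\phi_\sigma$ is a permutation, and that $I_A \sim I_B \iff A \cap B \neq \emptyset$ via $I_A \cap I_B = \bigcup_{i \in A \cap B} I_i$), which is a welcome improvement in rigor but not a different argument.
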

	\begin{proof}
	It is easy to verify that $\phi_{\sigma}$ is a permutation on $V(\Gamma(S))$. Now we show that $\phi_{\sigma}$ preserves adjacency. Let $I_{{i_1}{i_2}\cdots {i_t}}$ and $I_{{j_1}{j_2}\cdots {j_k}}$ be arbitrary vertices of $\Gamma(S)$ such that $I_{{i_1}{i_2}\cdots {i_t}} \sim I_{{j_1}{j_2}\cdots {j_k}}$. Then  $I_{{i_1}{i_2}\cdots {i_t}} \cap I_{{j_1}{j_2}\cdots {j_k}} \neq \emptyset$.
Now 
\begin{align*}
I_{{i_1}{i_2}\cdots {i_t}} \sim I_{{j_1}{j_2}\cdots {j_k}} 
&\Longleftrightarrow I_{\sigma({i_1})\sigma({i_2})\cdots \sigma({i_t})} \sim I_{\sigma({j_1})\sigma({j_2})\cdots \sigma({j_k})}\\
& \Longleftrightarrow \phi_{\sigma}(I_{{i_1}{i_2}\cdots {i_t}}) \sim \phi_{\sigma}(I_{{j_1}{j_2}\cdots {j_k}}).
\end{align*}
Thus, $\phi_{\sigma} \in Aut(\Gamma(S))$.
\end{proof}
	
	\begin{proposition}\label{phisigma}
		For each $f \in  Aut(\Gamma(S))$, we have  $f = \phi_{\sigma}$ for some $\sigma \in S_n$.
	\end{proposition}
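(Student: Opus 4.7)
My plan is to exploit the degree formula from Lemma \ref{degree k}. A short computation shows that
\[
(2^k-2)+(2^{n-k}-2)+(2^{n-k}-1)(2^{k}-2) \;=\; 2^{n}-2^{n-k}-2,
\]
so the degree of a vertex $I_{{i_1}{i_2}\cdots{i_k}}$ depends only on the number $k$ of minimal left ideals it contains, and the function $k \mapsto 2^{n}-2^{n-k}-2$ is strictly increasing on $\{1,2,\ldots,n-1\}$. Since any $f\in\mathrm{Aut}(\Gamma(S))$ preserves degrees, $f$ must preserve this ``size'' parameter; in particular, $f$ permutes the set $\mathrm{Min}(S)=\{I_1,\ldots,I_n\}$. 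I would let $\sigma\in S_n$ be the unique permutation satisfying $f(I_i)=I_{\sigma(i)}$ for all $i\in[n]$.

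Next I would recover an arbitrary vertex from its neighborhood among the minimal left ideals. By Remark \ref{disjoint intersection minimal}, distinct minimal left ideals are disjoint, so for $K=I_{{i_1}{i_2}\cdots{i_k}}$ and any $j\in[n]$ we have $I_j\sim K$ in $\Gamma(S)$ if and only if $j\in\{i_1,\ldots,i_k\}$. Hence the set of minimal-left-ideal neighbors of $K$ is exactly $\{I_{i_1},\ldots,I_{i_k}\}$, and in particular $K$ is uniquely determined by this set (as $K$ is the union of its minimal subideals).

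Applying this to $f(K)$: since $f$ preserves adjacency and fixes $\mathrm{Min}(S)$ setwise, the minimal neighbors of $f(K)$ are exactly $\{f(I_{i_1}),\ldots,f(I_{i_k})\}=\{I_{\sigma(i_1)},\ldots,I_{\sigma(i_k)}\}$. By the recovery step applied to $f(K)$, this forces
\[
f(K)\;=\;I_{\sigma(i_1)\sigma(i_2)\cdots\sigma(i_k)}\;=\;\phi_{\sigma}(K).
\]
Since $K$ was arbitrary, $f=\phi_\sigma$, as required. The only nontrivial part of the argument is the degree calculation already done in Lemma \ref{degree k}; the remaining steps are purely combinatorial and follow from the disjointness of distinct minimal left ideals together with Remark \ref{everynontrivial left ideal is union}.
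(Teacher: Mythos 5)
Your proposal is correct and follows essentially the same route as the paper's own proof: use Lemma \ref{degree k} to see that an automorphism must permute ${\rm Min}(S)$, define $\sigma$ from that permutation, and then recover $f(K)$ from its adjacencies with the minimal left ideals via Remarks \ref{disjoint intersection minimal} and \ref{everynontrivial left ideal is union}. Your explicit simplification of the degree to $2^{n}-2^{n-k}-2$ and the observation that it is strictly increasing in $k$ makes the first step more transparent than the paper's terse ``in view of Lemma \ref{degree k}'', but the argument is the same.
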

	\begin{proof}
	 In view of Lemma \ref{degree k} and Lemma \ref{symmetric group}, suppose that
	 $f(I_{i_1}) = I_{j_1}$, $f(I_{i_2}) = I_{j_2}$, $\ldots$, $f(I_{i_n}) = I_{j_n}$. Consider $\sigma \in S_n$ such that $\sigma(i_1) = j_1, \sigma(i_2) = j_2, \ldots, \sigma(i_n) = j_n$. Then $\phi_{\sigma}(I_{{i_1}{i_2}\cdots{i_k}}) = I_{\sigma({i_1})\sigma({i_2})\cdots \sigma({i_k})} = I_{{j_1}{j_2}\cdots{j_k}}$ (cf. Lemma \ref{symmetric group}). Clearly, $I_{i_1} \sim I_{{i_1}{i_2}\cdots{i_k}}$, $I_{i_2} \sim I_{{i_1}{i_2}\cdots{i_k}}$, $\ldots$, $I_{i_k} \sim I_{{i_1}{i_2}\cdots{i_k}}$. Also note that    $I_{i_t} \cap I_{{i_1}{i_2}\cdots{i_k}}$ is trivial for ${i_t} \in \{i_{k+1}, i_{k+2}, \ldots, i_{n}\}$  where $i_{k+1}, i_{k+2}, \ldots, i_{n}\in [n] \setminus \{i_1, i_2, \ldots, i_k\}$. It follows that $I_{i_{k+1}} \nsim I_{{i_1}{i_2}\cdots{i_k}}$, $I_{i_{k+2}} \nsim I_{{i_1}{i_2}\cdots{i_k}}$, $\ldots$, $I_{i_{n}} \nsim I_{{i_1}{i_2}\cdots{i_k}}$. Thus, $f(I_{i_1}) \sim f(I_{{i_1}{i_2}\cdots{i_k}})$, $f(I_{i_2}) \sim f(I_{{i_1}{i_2}\cdots{i_k}})$, $\ldots$, $f(I_{i_k}) \sim f(I_{{i_1}{i_2}\cdots{i_k}})$ and $f(I_{i_{k+1}}) \nsim f(I_{{i_1}{i_2}\cdots{i_k}})$, $f(I_{i_{k+2}}) \nsim f(I_{{i_1}{i_2}\cdots{i_k}})$, $\ldots$, $f(I_{i_{n}}) \nsim f(I_{{i_1}{i_2}\cdots{i_k}})$. Consequently, $I_{j_1} \subset f(I_{{i_1}{i_2}\cdots{i_k}})$, $I_{j_2} \subset f(I_{{i_1}{i_2}\cdots{i_k}})$, $\ldots$, $I_{j_k} \subset f(I_{{i_1}{i_2}\cdots{i_k}})$ and $I_{j_{k+1}} \not \subset f(I_{{i_1}{i_2}\cdots{i_k}})$, $I_{j_{k+2}} \not \subset f(I_{{i_1}{i_2}\cdots{i_k}})$, $\ldots$, $I_{j_n} \not \subset f(I_{{i_1}{i_2}\cdots{i_k}})$. It follows that $f(I_{{i_1}{i_2}\cdots{i_k}}) = I_{{j_1}{j_2}\cdots{j_k}} = \phi_{\sigma}(I_{{i_1}{i_2}\cdots{i_k}})$. Thus, $f = \phi_{\sigma}$.
	\end{proof}
	
	\begin{theorem}\label{automorphism group}
		Let $S$ be the union of $n$ minimal left ideals. Then for $n \geq 2$, we have $ Aut(\Gamma(S)) \cong S_n$. Moreover, $|Aut(\Gamma(S))| = n!$.
	\end{theorem}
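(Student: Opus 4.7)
The plan is to package together Lemma \ref{symmetric group} and Proposition \ref{phisigma} into a group isomorphism, with the bulk of the technical work already done. Define $\Phi : S_n \to \mathrm{Aut}(\Gamma(S))$ by $\Phi(\sigma) = \phi_\sigma$. Lemma \ref{symmetric group} guarantees that this map is well-defined, and Proposition \ref{phisigma} immediately gives surjectivity. So all that remains is to verify that $\Phi$ is a group homomorphism and is injective.

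For the homomorphism property, I would simply compute on a generic vertex: for $\sigma,\tau \in S_n$ and any nontrivial left ideal $I_{i_1 i_2 \cdots i_k}$,
\begin{align*}
(\phi_\sigma \circ \phi_\tau)(I_{i_1 i_2 \cdots i_k})
&= \phi_\sigma(I_{\tau(i_1)\tau(i_2)\cdots \tau(i_k)}) \\
&= I_{\sigma(\tau(i_1))\sigma(\tau(i_2))\cdots \sigma(\tau(i_k))} \\
&= \phi_{\sigma\tau}(I_{i_1 i_2 \cdots i_k}),
\end{align*}
which shows $\Phi(\sigma\tau) = \Phi(\sigma)\Phi(\tau)$.

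For injectivity, if $\phi_\sigma = \phi_{\sigma'}$, then restricting to the minimal left ideals (i.e., singletons $I_{i_1}$) gives $I_{\sigma(i_1)} = I_{\sigma'(i_1)}$ for every $i_1 \in [n]$. Since distinct minimal left ideals are disjoint (Remark \ref{disjoint intersection minimal}) and hence distinct as vertices, we conclude $\sigma(i_1) = \sigma'(i_1)$ for all $i_1$, so $\sigma = \sigma'$. Combining injectivity with the surjectivity from Proposition \ref{phisigma} and the homomorphism property, $\Phi$ is a group isomorphism, yielding $\mathrm{Aut}(\Gamma(S)) \cong S_n$ and hence $|\mathrm{Aut}(\Gamma(S))| = n!$.

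I do not expect any real obstacle here, since the heavy lifting is done by Proposition \ref{phisigma} (which uses the degree calculation of Lemma \ref{degree k} to pin down the images of the minimal left ideals). The only subtlety is checking the composition convention matches the convention $uf \sim vf \Leftrightarrow u \sim v$ used to define $\mathrm{Aut}(\Gamma)$; since the statement is symmetric in $\sigma$ and $\sigma^{-1}$, either composition order yields the same isomorphism type $S_n$.
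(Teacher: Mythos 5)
Your proposal is correct and follows essentially the same route as the paper: both use Lemma \ref{symmetric group} for well-definedness and Proposition \ref{phisigma} for surjectivity, and then identify $\mathrm{Aut}(\Gamma(S))$ with $S_n$ via $\sigma \mapsto \phi_\sigma$. You merely spell out the homomorphism computation and the injectivity check (via the action on the minimal left ideals) that the paper leaves implicit, which is a welcome but not substantively different addition.
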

	
	\begin{proof}
			In view of Lemma \ref{symmetric group} and by Proposition \ref{phisigma}, 
		note that the underlying set of the automorphism group of $\Gamma(S)$ is
		$Aut(\Gamma(S)) = \{\phi_{\sigma} \; : \; \sigma \in S_n \}$, where $S_n$ is a symmetric group of degree $n$. The groups $Aut(\Gamma(S))$ and $S_n$ are isomorphic under the assignment $\phi_{\sigma} \mapsto \sigma$.  Since all the elements in $Aut(\Gamma(S))$ are distinct, we have $|Aut(\Gamma(S))| = n!$.
	\end{proof}
	
\section{Acknowledgement}
The first author gratefully acknowledge for providing financial support to CSIR  (09/719(0093)/2019-EMR-I) government of India. The second author wishes to acknowledge the support of MATRICS Grant  (MTR/2018/000779) funded by SERB, India.

\end{document}